\newtheorem{Def}{Definition}
\newtheorem{Th}[Def]{Theorem}
\newtheorem{Prop}[Def]{Proposition}
\newtheorem{Lem}[Def]{Lemma}
\newtheorem{Cor}[Def]{Corollary}
\newtheorem{Rem}[Def]{Remark}
\newtheorem{Ex}[Def]{Example}
\newtheorem{Fact}[Def]{Fact}
\newcommand{\Q}{\mathbb{Q}}
\newcommand{\Z}{\mathbb{Z}}
\newcommand{\C}{\mathbb{C}}
\newcommand{\CB}{\mathcal{B}}
\newcommand{\CE}{\mathcal{E}}
\newcommand{\DS}{\displaystyle }
\newcommand{\al}{\alpha }
\newcommand{\be}{\beta }
\newcommand{\ga}{\gamma }
\newcommand{\de}{\delta }
\newcommand{\Ga}{\Gamma }
\newcommand{\De}{\Delta }
\newcommand{\vep}{\varepsilon }
\newcommand{\vph}{\varphi }
\newcommand{\la}{\lambda }
\newcommand{\om}{\omega }
\newcommand{\na}{\nabla }
\newcommand{\pa}{\partial }
\newcommand{\ot}{\otimes }
\newcommand{\Ker}{{\rm Ker}}
\newcommand{\mat}[2]{
  \left( \begin{array}{#1}
    #2
  \end{array} \right)
}
\newcommand{\TP}[1]{
  \sideset{^t}{}{\mathop{#1}}
}
\newcommand{\GA}[1]{
  \Gamma \left( #1 \right)
}
\newcommand{\diag}{{\rm diag}}
\newcommand{\tpi}{2\pi \sqrt{-1}}
\title[Contiguity relations of $F_D$]
{Contiguity relations of Lauricella's $F_D$ revisited}
\author[Y. Goto]{Yoshiaki GOTO}
\address{Department of Mathematics, 
Graduate School of Science,
Kobe University, 
Kobe 657-8501, Japan}
\email{y-goto@math.kobe-u.ac.jp}
\subjclass[2010]{33C65, 33C90.}
\keywords{Lauricella's $F_D$, Contiguity relations, 
  Twisted (co)homology groups, Contingency tables.}
\dedicatory{}
\date{}
\begin{document}

\maketitle

\begin{abstract}
  We study contiguity relations of Lauricella's hypergeometric 
  function $F_D$, 
  by using the twisted cohomology group and the intersection form. 
  We derive contiguity relations from those in  
  the twisted cohomology group 
  and give the coefficients in these relations 
  by the intersection numbers. 
  Furthermore, we construct twisted cycles corresponding to
  a fundamental set of solutions to the system of differential equations 
  satisfied by $F_D$, which are expressed as Laurent series. 
  We also give the contiguity relations of these solutions. 
\end{abstract}

\section{Introduction}
Lauricella's hypergeometric series $F_D$ of
$m$ variables $x_1$, $\ldots$, $x_m$ with complex parameters 
$a$, $b_1$, $\ldots$, $b_m$, $c$ is defined by 
\begin{align*}
  F_D (a,b,c;x)
  =\sum_{n_1,\ldots,n_m=0} ^{\infty } 
  \frac{(a,n_1+\cdots +n_m) (b_1,n_1) \cdots (b_m ,n_m)}
  {(c,n_1+\cdots +n_m) n_1! \cdots n_m!} x_1^{n_1} \cdots x_m^{n_m} ,  
\end{align*}
where $x=(x_1,\ldots,x_m)$, $b=(b_1,\ldots,b_m)$, 
$c \not\in \{ 0,-1,-2,\ldots \}$, 
and $(a,n)=\Gamma (a+n)/\Gamma (a)$. 
This series converges in the domain
$\left\{  x\in \C^m \left|
    \  |x_i|<1 \ (1\leq i \leq m)
  \right. \right\} $. 
It is known that 
$F_D(a,b,c;x)$ admits an Euler-type integral representation: 
\begin{align}
  \label{integral} 
  F_D(a,b,c;x) 
  = \frac{\Ga(c)}{\Ga(a)\Ga(c-a)} 
  \int _1^{\infty} 
  t^{\sum_{i=1}^m b_i -c} (t-1)^{c-a-1} \prod_{i=1}^m (t-x_i)^{-b_i} dt .
\end{align}

The contiguity relations of Lauricella's $F_D$ 
have been studied from several points of view. 
In the 1970s, W. Miller Jr.~\cite{Miller} gave the contiguity relations 
of $F_D$ as a representation of a Lie algebra, and 
Aomoto \cite{Aomoto} studied the contiguity relations of $F_D$ 
and its generalization to the hypergeometric functions of type $(k,n)$. 
In 1991, Sasaki \cite{Sasaki} studied the contiguity relations 
in the framework of the Aomoto-Gel'fand system 
on the Grassmannian manifold.
In 1989, an algorithmic method that used Gr\"obner bases to derive the contiguity relations was given 
by Takayama \cite{Takayama}. 
Recently, Ogawa, Takemura, and Takayama \cite{OTT} have
illustrated that the Pfaffian system and 
the contiguity relations for $F_D$ combine to give a method to evaluate 
the normalizing constant of 
the hypergeometric distribution on 
the $2$ by $N$ contingency tables 
with given marginal sums. 
On the other hand, 
Matsumoto \cite{M-FD} recently proposed a method 
that utilizes the intersection numbers of twisted cohomology groups to derive Pfaffian systems. 
In this paper, we reconsider the problem of the contiguity relations of $F_D$, 
in order to produce formulas for application to statistics \cite{OTT}. 
Matsumoto's method can be applied to derive the contiguity relations for our 
purpose, and further generalizations will be possible.

We derive the contiguity relations of 
$F_D$ by considering 
the twisted cohomology groups associated with 
the integral representation (\ref{integral}). 
We regard the contiguity relations as those between the twisted cocycles. 
To obtain the coefficients in the contiguity relations, 
we use the intersection form of the twisted cohomology group. 
In the way, we are able to derive the contiguity relations 
for the basis given in \cite{M-FD}, 
which was also used in \cite{OTT}. 
An advantage of our method is that 
it makes it easy to systematically derive the contiguity relations for a given basis of the twisted cohomology group. 

This paper is arranged as follows. 
In Sections \ref{sec-cohomology}, \ref{sec-contiguity}, and \ref{sec-proof}, 
we introduce our method for using the intersection form to
derive the contiguity relations. 
By evaluating the intersection numbers, we obtain explicit forms for the 
contiguity relations. 
In Section \ref{sec-DE}, 
we introduce the system $E_D (a,b,c)$ of differential equations 
satisfied by $F_D(a,b,c;x)$, and we introduce 
the Laurent series solution $f^{(k)}(a,b,c;x)$ to $E_D (a,b,c)$ 
and construct a fundamental set of solutions. 
In Section \ref{sec-cycle}, 
we construct the twisted cycle $r_k$ 
corresponding to the solution $f^{(k)}(a,b,c;x)$. 
Since our contiguity relations are obtained from 
those in the twisted cohomology group, 
the integration on $r_k$ gives the contiguity relations of $f^{(k)}$. 
In Section \ref{sec-application}, we present 
an application of our formula, in which we evaluate 
the normalizing constant of 
the hypergeometric distribution on 
the $2$ by $(m+1)$ contingency tables; 
this is also explained in \cite{OTT} in the context of statistics. 
We also explain how to apply our results 
when the parameters $(a,b,c)$ are integers. 

Although the contiguity relations of $F_D$ have been 
studied by several authors, 
those of the other solutions $f^{(k)}$ that 
appear in applications to statistics 
have not been studied.

\section{Twisted cohomology group and intersection pairing}
\label{sec-cohomology}
We summarize some results in \cite{AK}, \cite{CM}, and \cite{M-FD} 
that will be used in this paper. 
We consider 
the twisted cohomology group for 
$$
T_x:=\C -\{ x_0, x_1 ,\ldots ,x_m,x_{m+1} \}
$$
and the multivalued function 
\begin{align*}
  u_x(t) := \prod_{i=0}^{m+1} (t-x_i)^{\al_i} ,
\end{align*}
where 
\begin{align}
  & \nonumber
  x_0:=0,\quad x_{m+1}:=1,\\
  & \label{alpha}
  \al_0:=-c+\sum_{j=1}^m b_j ,\quad 
  \al_k:=-b_k \ (1\leq k \leq m),\quad 
  \al_{m+1}:=c-a,\quad \al_{m+2}:=a. 
\end{align}
Except in Section \ref{sec-application}, 
we assume the condition 
\begin{align}
  \label{generic}
  \al_k \not\in \Z \quad (0 \leq k \leq m+2).
\end{align}

We denote the vector space consisting of the smooth $k$-forms on $T_x$ 
and that with compact support by $\CE^k(T_x)$ and $\CE^k_c(T_x)$,
respectively. 
We set $\om :=d \log u_x$ and $\na_{\om} :=d+\om \wedge$, 
where $d$ is the exterior derivative 
with respect to the variable $t$ 
(note that this is not with respect to $x_1,\ldots ,x_m$, 
which are regarded as parameters). 
The 
twisted cohomology group and that with compact support 
are defined as  
\begin{align*}
  & H^1 (T_x,\na_{\om})
  =\Ker (\na_{\om}: \CE^1(T_x) \to \CE^2(T_x)) 
  / \na_{\om} (\CE^0(T_x)), \\
  & H_c ^1 (T_x,\na_{\om})
  =\Ker (\na_{\om}: \CE^1_c(T_x) \to \CE^2_c(T_x)) 
  / \na_{\om} (\CE^0_c(T_x))  ,
\end{align*}
respectively. 
The expression (\ref{integral}) means that the integral 
$$
\int_1^{\infty} u_x \vph_0 ,\ \ 
\vph_0  :=\frac{dt}{t-1} 
$$
represents $F_D(a,b,c;x)$ modulo Gamma factors. 
By \cite{AK}, 
$H^1 (T_x,\na_{\om})$ has $(m+1)$dimensions, 
and there is a canonical isomorphism 
$\jmath : H^1 (T_x,\na_{\om}) \to H_c ^1 (T_x,\na_{\om})$; 
see also \cite[Fact 6.1]{M-FD}. 
Hereafter, we identify $H_c ^1 (T_x,\na_{\om})$ with $H^1 (T_x,\na_{\om})$. 

The intersection form $I_c$ on the twisted cohomology groups is 
the pairing between $H^1 (T_x,\na_{\om})$ and $H^1 (T_x,\na_{-\om})$, and 
it is defined as follows: 
$$
I_c (\psi ,\psi'):=\int_{T_x} \jmath (\psi) \wedge \psi' ,\quad 
\psi \in H^1(T_x,\na_{\om}) ,\ \psi' \in H^1 (T_x,\na_{-\om}). 
$$

We put 
\begin{align*}
  & \vph_{i,m+2}:=\frac{dt}{t-x_i},\quad 
  \vph_{i,j}:=\vph_{i,m+2}-\vph_{j,m+2}=\frac{(x_i-x_j)dt}{(t-x_i)(t-x_j)}, \\
  &\vph_0 =\vph_{m+1,m+2}=\frac{dt}{t-1},\quad 
  \vph_k :=\vph_{m+1,k}=\frac{(1-x_k)dt}{(t-x_k)(t-1)} , 
\end{align*}
where $0 \leq i,j \leq m+1$ and $1 \leq k \leq m$. 
The intersection numbers among these 1-forms are evaluated 
in \cite{CM}; see also \cite[Fact 6.2]{M-FD}. 
\begin{Fact}[\cite{CM}]
  \label{intersection}
  We have 
  \begin{align*}
    I_c (\vph_{i,j} ,\vph_{p,q})
    =\tpi \left(  
      \frac{\de_{i,p}-\de_{i,q}}{\al_i}-
      \frac{\de_{j,p}-\de_{j,q}}{\al_j}
    \right) ,
  \end{align*}
  where $i,j,p,q \in \{ 0,1,\ldots,m+2 \}$, and 
  $\de_{i,p}$ is the Kronecker delta. 
  Thus, the intersection matrix 
  $C(a,b,c):=\left( I_c(\vph_i ,\vph_j) \right)_{i,j=0,\ldots m}$ is 
  $$
  C(a,b,c)=2\pi \sqrt{-1} \left\{
  \frac{1}{\al_{m+1}}N+\diag \left( 
    \frac{1}{\al_{m+2}},\frac{1}{\al_1} ,\dots, \frac{1}{\al_m} \right) \right\}, 
  $$
  where 
  $$
  N=\mat{ccc}{1&\cdots&1 \\ \vdots&\ddots&\vdots \\ 1&\cdots&1}. 
  $$
  Under assumption (\ref{generic}), we have 
  $$
  \det (C(a,b,c))=(\tpi)^{m+1} \frac{-\al_0}{\prod_{i=1}^{m+2} \al_i} 
  \neq 0, 
  $$
  and hence $\vph_0 ,\ldots ,\vph_m$ form a basis of $H^1(T_x,\na_{\om})$. 
\end{Fact}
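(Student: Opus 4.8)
The plan is to take the formula for $I_c(\varphi_{i,j},\varphi_{p,q})$ in Fact \ref{intersection} as given (it is quoted from \cite{CM}) and to extract from it, first the shape of the matrix $C(a,b,c)$, and then its determinant. For the entries of $C(a,b,c)$, I would simply specialize the general formula. Recall $\varphi_0=\varphi_{m+1,m+2}$ and $\varphi_k=\varphi_{m+1,k}$ for $1\le k\le m$. Computing $I_c(\varphi_{m+1,k},\varphi_{m+1,\ell})$ for indices in $\{1,\dots,m\}\cup\{m+2\}$, the term $\tfrac{\delta_{m+1,p}-\delta_{m+1,q}}{\alpha_{m+1}}$ always contributes $\tfrac{1}{\alpha_{m+1}}$ (since the second subscript of each $\varphi$ is in $\{1,\dots,m,m+2\}$, never $m+1$), giving the rank-one block $\tfrac{1}{\alpha_{m+1}}N$; and the term $\tfrac{\delta_{k,p}-\delta_{k,q}}{\alpha_k}$ contributes $\tfrac{1}{\alpha_k}$ on the diagonal and $0$ off it, where the relevant "$k$" for the row indexed by $\varphi_0$ is $m+2$ and for the row indexed by $\varphi_k$ ($1\le k\le m$) is $k$. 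This yields exactly
\begin{align*}
  C(a,b,c)=\tpi\left\{\frac{1}{\alpha_{m+1}}N+\diag\!\left(\frac{1}{\alpha_{m+2}},\frac{1}{\alpha_1},\dots,\frac{1}{\alpha_m}\right)\right\}.
\end{align*}

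For the determinant, I would factor out $\tpi$ from each of the $m+1$ rows, reducing to $\det\!\big(\tfrac{1}{\alpha_{m+1}}N+D\big)$ where $D=\diag(\tfrac{1}{\alpha_{m+2}},\tfrac{1}{\alpha_1},\dots,\tfrac{1}{\alpha_m})$. Since $N=\mathbf{1}\,\tp{\mathbf{1}}$ is rank one with $\mathbf{1}=\tp(1,\dots,1)$, the matrix determinant lemma gives $\det\big(D+\tfrac{1}{\alpha_{m+1}}\mathbf{1}\tp{\mathbf{1}}\big)=\det(D)\big(1+\tfrac{1}{\alpha_{m+1}}\,\tp{\mathbf{1}}D^{-1}\mathbf{1}\big)$. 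Here $\det(D)=\tfrac{1}{\alpha_{m+2}\alpha_1\cdots\alpha_m}=1/\prod_{i=1}^{m+2}\alpha_i$ after including $\alpha_{m+1}$ correctly—more precisely $\det(D)=1/(\alpha_{m+2}\prod_{k=1}^m\alpha_k)$—and $\tp{\mathbf{1}}D^{-1}\mathbf{1}=\alpha_{m+2}+\sum_{k=1}^m\alpha_k$. Recalling from (\ref{alpha}) that $\alpha_1+\cdots+\alpha_m=-\sum b_k$ and $\alpha_{m+1}+\alpha_{m+2}=c$, so that $\alpha_{m+2}+\sum_{k=1}^m\alpha_k=a-\sum b_k$, one computes
\begin{align*}
  1+\frac{1}{\alpha_{m+1}}\Big(\alpha_{m+2}+\sum_{k=1}^m\alpha_k\Big)=\frac{\alpha_{m+1}+\alpha_{m+2}+\sum_{k=1}^m\alpha_k}{\alpha_{m+1}}=\frac{\sum_{k=1}^{m}\alpha_k+c}{\alpha_{m+1}}.
\end{align*}
Now $\sum_{k=0}^{m+2}\alpha_k=\alpha_0+(\alpha_1+\cdots+\alpha_m)+\alpha_{m+1}+\alpha_{m+2}=(-c+\sum b_j)+(-\sum b_j)+c=0$, hence $\alpha_1+\cdots+\alpha_m+\alpha_{m+1}+\alpha_{m+2}=-\alpha_0$, so the bracketed scalar equals $-\alpha_0/\alpha_{m+1}$. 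Multiplying through, $\det(C(a,b,c))=(\tpi)^{m+1}\cdot\tfrac{1}{\alpha_{m+2}\prod_{k=1}^m\alpha_k}\cdot\tfrac{-\alpha_0}{\alpha_{m+1}}=(\tpi)^{m+1}\tfrac{-\alpha_0}{\prod_{i=1}^{m+2}\alpha_i}$, as claimed. Under (\ref{generic}) each $\alpha_i\notin\Z$, so in particular every $\alpha_i\ne 0$ and $\alpha_0\ne 0$, whence the determinant is nonzero; since $\dim H^1(T_x,\nabla_\omega)=m+1$ by \cite{AK}, the $m+1$ classes $\varphi_0,\dots,\varphi_m$ are linearly independent (a linear relation among them would force their intersection matrix against any dual basis to be singular, contradicting $\det C\ne 0$) and therefore form a basis.

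The only genuinely substantive input is Fact \ref{intersection}'s entrywise formula, which we are entitled to assume; everything after that is bookkeeping. The step most likely to need care is the identification of the diagonal entries: one must track that $\varphi_0$ and $\varphi_k$ are both written with "$m+1$" as a common first subscript, so the $N$-block comes from the $\alpha_{m+1}$-term uniformly, while the diagonal comes from the \emph{second} subscripts $m+2,1,2,\dots,m$ — getting this indexing right is what makes the matrix come out exactly as stated rather than with a permuted or mislabeled diagonal. The determinant computation itself, via the rank-one update, is then immediate and poses no obstacle.
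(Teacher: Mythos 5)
Your proposal is correct: the paper itself gives no proof of this Fact (the entrywise formula is quoted from \cite{CM}), and your derivation of the matrix $C(a,b,c)$, of its determinant via the rank-one update $\det(D+\tfrac{1}{\al_{m+1}}N)=\det(D)\bigl(1+\tfrac{1}{\al_{m+1}}\sum\bigr)$ together with $\sum_{p=0}^{m+2}\al_p=0$, and of the basis claim from $\det C\neq 0$ and $\dim H^1(T_x,\na_\om)=m+1$ is exactly the routine verification the paper leaves implicit. Only a cosmetic slip: mid-argument you briefly write $\det(D)=1/\prod_{i=1}^{m+2}\al_i$ before correcting it to $1/(\al_{m+2}\prod_{k=1}^m\al_k)$, and the final computation uses the correct value, so the result stands.
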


\section{Contiguity relations}
\label{sec-contiguity}
In this section, we derive the contiguity relations by using the intersection form. 

We define two column vectors of size $m+1$: 
\begin{align*}
  &F(a,b,c;x):=
  \TP{ \left(F_D(a,b,c;x) ,\ 
    \frac{x_1-1}{\al_1} \frac{\pa}{\pa x_1}F_D(a,b,c;x) ,\  \cdots ,\  
    \frac{x_m-1}{\al_m} \frac{\pa}{\pa x_m}F_D(a,b,c;x) \right) },\\
  &\tilde{F}(a,b,c;x):=
  \frac{\Ga(a) \Ga(c-a)}{\Ga(c)}F(a,b,c;x).
\end{align*}
For $(v_0,\ldots ,v_m)\in \C^{m+1}$, we regard 
$v_i$ as the $i$-th entry. 
For example, the $0$-th entry of $F(a,b,c;x)$ is $F_D(a,b,c;x)$. 
By \cite[Corollary 7.2]{M-FD}, we have 
$$
\TP{ \left( \int_1^{\infty} u_x \vph_0 ,\ldots ,\int_1^{\infty} u_x \vph_m \right) }
=\tilde{F}(a,b,c;x). 
$$
Our main theorem states the contiguity relations of 
the vector-valued function $F(a,b,c;x)$. 

For $0\leq k \leq m+1$, there exist $p_{il}^{(k)}(a,b,c;x)$'s such that  
\begin{align}
  \label{wa}
  (t-x_k)\cdot \vph_i 
  =\sum_{l=0}^m p_{il}^{(k)}(a,b,c;x) \cdot \vph_l   
\end{align}
as elements in the twisted cohomology group $H^1(T_x,\na_{\om})$. 
We put $P_k(a,b,c,x):=\left( p_{ij}^{(k)}(a,b,c;x) \right)_{i,j}$. 
\begin{Lem}\label{contiguity-lem}
  \begin{align*}
    \tilde{F}(a-1,b,c;x)&=P_{m+1}(a,b,c;x)\tilde{F}(a,b,c;x), \\
    \tilde{F}(a-1,b,c-1;x)&=P_0(a,b,c;x)\tilde{F}(a,b,c;x), \\
    \tilde{F}(a-1,b-e_k,c-1;x)&=P_k(a,b,c;x)\tilde{F}(a,b,c;x) \quad 
    (1\leq k \leq m). 
  \end{align*}
  Here, $e_k$ is the $k$-th unit vector in $\C^m$. 
\end{Lem}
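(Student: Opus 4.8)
The plan is to integrate the cohomological identity $(\ref{wa})$ against the multivalued function $u_x$ over (the regularization of) the path from $1$ to $\infty$, using the observation that each of the three parameter shifts in the statement amounts exactly to multiplying $u_x$ by one of the linear factors $t-x_k$.

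First I would record how $u_x$ changes under the shifts; write $u_x=u_x^{(a,b,c)}$ to display the parameter dependence. From $(\ref{alpha})$ one checks: under $(a,b,c)\mapsto(a-1,b,c)$ the exponents $\al_0,\dots,\al_m$ are unchanged, $\al_{m+1}=c-a$ increases by $1$, and $\al_{m+2}=a$ decreases by $1$, so $u_x^{(a-1,b,c)}=(t-1)\,u_x^{(a,b,c)}=(t-x_{m+1})\,u_x^{(a,b,c)}$; under $(a,b,c)\mapsto(a-1,b,c-1)$ only $\al_0$ (increasing by $1$) and $\al_{m+2}$ (decreasing by $1$) change, so $u_x^{(a-1,b,c-1)}=t\,u_x^{(a,b,c)}=(t-x_0)\,u_x^{(a,b,c)}$; under $(a,b,c)\mapsto(a-1,b-e_k,c-1)$ only $\al_k=-b_k$ (increasing by $1$) and $\al_{m+2}$ (decreasing by $1$) change, so $u_x^{(a-1,b-e_k,c-1)}=(t-x_k)\,u_x^{(a,b,c)}$. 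In each case the exponent at infinity $\al_{m+2}$ drops by $1$, consistently with multiplication by a degree-one polynomial; and since every $\al_i$ changes only by an integer, condition $(\ref{generic})$ still holds at the shifted parameters.

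Now fix $k\in\{0,\dots,m+1\}$ together with the associated shift. Since the $1$-forms $\vph_0,\dots,\vph_m$ depend only on $x$ and not on the parameters, combining the transformation rule above with $(\ref{wa})$ gives, for $i=0,\dots,m$,
\begin{align*}
  \int_1^{\infty} u_x^{\mathrm{sh}}\,\vph_i
  =\int_1^{\infty} u_x\,(t-x_k)\,\vph_i
  =\sum_{l=0}^m p_{il}^{(k)}(a,b,c;x)\int_1^{\infty} u_x\,\vph_l ,
\end{align*}
where $u_x^{\mathrm{sh}}$ denotes $u_x$ at the shifted parameters and the last equality uses that $\psi\mapsto\int_1^{\infty}u_x\psi$ is well defined on $H^1(T_x,\na_{\om})$: under $(\ref{generic})$ the regularization of the path from $1$ to $\infty$ represents a class in the twisted homology group paired with $H^1(T_x,\na_{\om})$ (\cite{AK}), and since $u_x\,\na_{\om}g=d(u_x g)$, replacing a cocycle by a cohomologous one changes the integral only by a boundary term, which vanishes on a cycle. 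Reading these equalities for $i=0,\dots,m$ as a single matrix identity, the left-hand side is the column vector $\bigl(\int_1^{\infty}u_x^{\mathrm{sh}}\vph_i\bigr)_{i=0}^m$, which by \cite[Corollary 7.2]{M-FD} applied at the shifted parameters equals $\tilde F$ at those parameters, while the right-hand side is $P_k(a,b,c;x)$ applied to $\bigl(\int_1^{\infty}u_x\vph_i\bigr)_{i=0}^m=\tilde F(a,b,c;x)$ by the same corollary at the original parameters. Specializing to $k=m+1$, $k=0$, and $1\le k\le m$ yields the three asserted identities.

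The only substantive point is the passage from an identity in $H^1(T_x,\na_{\om})$ to an identity of the integrals, i.e.\ the well-definedness of the integration pairing between twisted homology and twisted cohomology; under $(\ref{generic})$ this is supplied by \cite{AK} (see also \cite[Fact 6.1]{M-FD}). Everything else is the bookkeeping of the exponent shifts above together with \cite[Corollary 7.2]{M-FD}. The coefficients $p_{il}^{(k)}$ themselves are then pinned down explicitly by pairing $(\ref{wa})$ with a dual basis and inserting the intersection numbers of Fact $\ref{intersection}$; this is the computation carried out in the sections that follow, which upgrades Lemma $\ref{contiguity-lem}$ to fully explicit contiguity relations.
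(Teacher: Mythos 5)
Your proposal is correct and follows essentially the same route as the paper: integrate the cohomological identity (\ref{wa}) against $u_x$ over the (regularized) path $(1,\infty)$, observe that multiplication by $t-x_k$ realizes the parameter shift in $u_x$, and identify the resulting integrals with the entries of $\tilde F$ at the shifted parameters via \cite[Corollary 7.2]{M-FD} (the paper does this by direct inspection of the Euler integral for the $0$-th entry and declares the rest analogous). Your added remarks on the well-definedness of the pairing and on condition (\ref{generic}) being preserved under integer shifts are correct fillings-in of details the paper leaves implicit, not a different argument.
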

For example, we have $b-e_1 =(b_1-1,b_2 ,\dots,b_m)$.
\begin{proof}
  Recall that $x_{m+1}=1$. 
  We consider the integration of (\ref{wa}) on $(1,\infty)$. 
  By (\ref{integral}), we have 
  \begin{align*}
    \int_1^{\infty} u_x \cdot (t-1)\vph_0 
    &=\int _1^{\infty} 
    t^{\sum_i b_i -c} (t-1)^{c-a} \prod_{i=1}^m (t-x_i)^{-b_i} dt \\
    &=\frac{\Ga(a-1) \Ga(c-a+1)}{\Ga(c)} F_D(a-1,b,c;x),
  \end{align*}
  which is the $0$-th entry of $\tilde{F}(a-1,b,c;x)$. 
  Then, the first equality follows. 
  The other ones are shown in an analogous way. 
\end{proof}
The following lemma is obvious. 
\begin{Lem}\label{contiguity-cor}
  \begin{align*}
    \tilde{F}(a-1,b,c;x)&=P_{m+1}(a,b,c;x) \tilde{F}(a,b,c;x), \\
    \tilde{F}(a,b,c-1;x)&=P_0(a+1,b,c;x)P_{m+1}(a+1,b,c;x)^{-1}\tilde{F}(a,b,c;x), \\
    \tilde{F}(a,b-e_k,c;x)&=P_k(a+1,b,c+1;x)P_0(a+1,b,c+1;x)^{-1}\tilde{F}(a,b,c;x).
  \end{align*}
\end{Lem}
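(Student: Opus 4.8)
The plan is to derive each of the three identities from Lemma~\ref{contiguity-lem} by purely formal manipulations of the relations already established there, combined with the obvious fact that replacing $(a,b,c)$ by shifted parameters in \eqref{wa} is legitimate as long as the shifted parameters still satisfy the genericity assumption \eqref{generic} (so that $\vph_0,\ldots,\vph_m$ remain a basis and the matrices $P_k$ are invertible). The first identity is literally the first line of Lemma~\ref{contiguity-lem}, so nothing is to be done there.

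For the second identity, the idea is to compose two of the relations from Lemma~\ref{contiguity-lem} after a shift $a \mapsto a+1$. Applying the second line of Lemma~\ref{contiguity-lem} with $a$ replaced by $a+1$ gives
\begin{align*}
  \tilde{F}(a,b,c-1;x) = P_0(a+1,b,c;x)\,\tilde{F}(a+1,b,c;x),
\end{align*}
and applying the first line with $a\mapsto a+1$ gives $\tilde{F}(a,b,c;x) = P_{m+1}(a+1,b,c;x)\,\tilde{F}(a+1,b,c;x)$, hence $\tilde{F}(a+1,b,c;x) = P_{m+1}(a+1,b,c;x)^{-1}\,\tilde{F}(a,b,c;x)$. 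Substituting the latter into the former yields exactly the claimed middle identity. The third identity is obtained the same way: apply the third line of Lemma~\ref{contiguity-lem} with $(a,c)\mapsto(a+1,c+1)$ to get $\tilde{F}(a,b-e_k,c;x) = P_k(a+1,b,c+1;x)\,\tilde{F}(a+1,b,c+1;x)$, then apply the second line with $(a,c)\mapsto(a+1,c+1)$ to express $\tilde{F}(a,b,c;x) = P_0(a+1,b,c+1;x)\,\tilde{F}(a+1,b,c+1;x)$, invert, and substitute.

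There is essentially no obstacle here — this is bookkeeping, which is presumably why the excerpt calls the lemma ``obvious.'' The only point that deserves a word is the invertibility of $P_{m+1}(a+1,b,c;x)$ and $P_0(a+1,b,c+1;x)$: these matrices are invertible precisely because the left-hand bases $\{(t-x_k)\vph_i\}$ and $\{\vph_l\}$ in \eqref{wa} both span the $(m+1)$-dimensional cohomology group, or equivalently because $P_{m+1}$ and $P_0$ implement the base-point changes between the $\tilde F$-vectors at genericly-related parameter values (combining the lemma relations forwards and backwards forces $P_{m+1}(a,b,c;x)P_{m+1}(a-1,b,c;x)^{\mathrm{shifted}}$-type products to be identities, after the appropriate inverse shift). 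Under \eqref{generic} applied to the shifted parameters $(a+1,b,c)$ and $(a+1,b,c+1)$ — note $\al_{m+1}=c-a$ and $\al_{m+2}=a$ are unchanged or shift by integers in a way that keeps them non-integral exactly when the original parameters are generic and, say, $a\notin\Z$, which \eqref{generic} guarantees — these inverses exist, so each substitution above is valid as an identity of meromorphic functions in $x$.
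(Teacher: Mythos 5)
Your proposal is correct and is exactly the argument the paper has in mind: the paper declares the lemma ``obvious'' because it follows from Lemma~\ref{contiguity-lem} by the very parameter shifts $a\mapsto a+1$, $(a,c)\mapsto(a+1,c+1)$ and compositions you carry out. Your side remark on invertibility of $P_{m+1}$ and $P_0$ is somewhat loose (the clean justification is that $P_k=Q_kC(a,b,c)^{-1}$ with $\det Q_k\neq 0$ by Remark~\ref{detQ}, or that multiplication by $t-x_k$ is an isomorphism onto the cohomology for the shifted exponents), but this does not affect the correctness of the derivation.
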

To evaluate $P_k (a,b,c;x)$, we use the intersection form $I_c$. 
By (\ref{wa}), we have 
$$
I_c \left( (t-x_k)\vph_i ,\vph_j \right)
=\sum_{l=0}^m p_{il}^{(k)}(a,b,c;x)\cdot I_c (\vph_l ,\vph_j). 
$$
Let $Q_k(a,b,c;x):=\left( I_c \left((t-x_k)\vph_i ,\vph_j \right) \right)_{i,j}$. 
Then we obtain $Q_k(a,b,c;x) =P_k(a,b,c;x) C(a,b,c)$, that is, 
$$
P_k(a,b,c;x)=Q_k(a,b,c;x) C(a,b,c)^{-1}. 
$$
We can reduce Lemma \ref{contiguity-cor} to the relations between the $F(a,b,c;x)$'s
by using the formula $\Ga (s+1)=s\cdot \Ga (s)$.  
\begin{Th}[Contiguity relations]
  \label{main}
  We have 
  \begin{align*}
    F(a-1,b,c;x)&=D_a (a,b,c;x)F(a,b,c;x), \\
    F(a,b,c-1;x)&=D_c (a,b,c;x)F(a,b,c;x), \\
    F(a,b-e_k,c;x)&=D_k (a,b,c;x)F(a,b,c;x) \quad  (1\leq k \leq m), 
  \end{align*}
  where 
  \begin{align*}
    D_a (a,b,c;x)&:=\frac{a-1}{c-a} \cdot Q_{m+1}(a,b,c;x) \cdot C(a,b,c)^{-1}, \\
    D_c (a,b,c;x)&:=\frac{c-a-1}{c-1} \cdot Q_0(a+1,b,c;x) \cdot Q_{m+1}(a+1,b,c;x)^{-1}, \\
    D_k (a,b,c;x)&:=Q_k(a+1,b,c+1;x) \cdot Q_0(a+1,b,c+1;x)^{-1}. 
  \end{align*}
\end{Th}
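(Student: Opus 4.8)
The plan is to assemble Theorem~\ref{main} from the lemmas already in place, the only genuinely new input being the conversion of the $\tilde F$-relations of Lemma~\ref{contiguity-cor} into $F$-relations and the identification of the matrices $P_k$ with $Q_k C^{-1}$. First I would recall that, by definition, $\tilde F(a,b,c;x)=\frac{\Ga(a)\Ga(c-a)}{\Ga(c)}F(a,b,c;x)$, so each relation in Lemma~\ref{contiguity-cor} becomes a relation among the $F$'s after multiplying by the appropriate ratio of Gamma prefactors. For the first relation, $\tilde F(a-1,b,c;x)$ carries the prefactor $\frac{\Ga(a-1)\Ga(c-a+1)}{\Ga(c)}$, so dividing through by $\frac{\Ga(a)\Ga(c-a)}{\Ga(c)}$ produces the scalar $\frac{\Ga(a-1)\Ga(c-a+1)}{\Ga(a)\Ga(c-a)}=\frac{c-a}{a-1}$; hence $F(a-1,b,c;x)=\frac{a-1}{c-a}P_{m+1}(a,b,c;x)F(a,b,c;x)$, and substituting $P_{m+1}=Q_{m+1}C^{-1}$ gives exactly $D_a$.

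Next I would do the analogous bookkeeping for the other two relations. For $F(a,b,c-1;x)$, the prefactor of $\tilde F(a,b,c-1;x)$ is $\frac{\Ga(a)\Ga(c-1-a)}{\Ga(c-1)}$, and dividing by $\frac{\Ga(a)\Ga(c-a)}{\Ga(c)}$ yields $\frac{\Ga(c-1-a)\Ga(c)}{\Ga(c-a)\Ga(c-1)}=\frac{c-1}{c-a-1}$. Thus from $\tilde F(a,b,c-1;x)=P_0(a+1,b,c;x)P_{m+1}(a+1,b,c;x)^{-1}\tilde F(a,b,c;x)$ one gets the scalar factor $\frac{c-a-1}{c-1}$ multiplying $P_0(a+1,b,c;x)P_{m+1}(a+1,b,c;x)^{-1}$; writing each $P_k=Q_k C^{-1}$ the two $C$-factors cancel, leaving $Q_0(a+1,b,c;x)Q_{m+1}(a+1,b,c;x)^{-1}$, which is $D_c$ up to the scalar just computed. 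For $F(a,b-e_k,c;x)$, the prefactor of $\tilde F(a,b-e_k,c;x)$ equals that of $\tilde F(a,b,c;x)$ — both are $\frac{\Ga(a)\Ga(c-a)}{\Ga(c)}$, since only the $b_k$ parameter changes and that does not appear in the prefactor — so no scalar correction is needed; then $P_k(a+1,b,c+1;x)P_0(a+1,b,c+1;x)^{-1}=Q_k(a+1,b,c+1;x)C(a+1,b,c+1)^{-1}C(a+1,b,c+1)Q_0(a+1,b,c+1;x)^{-1}=Q_k(a+1,b,c+1;x)Q_0(a+1,b,c+1;x)^{-1}$, which is $D_k$.

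It remains to justify the identity $P_k=Q_k C^{-1}$ that underlies all three computations. This comes from applying the intersection form $I_c(-,\vph_j)$ to the cohomological relation \eqref{wa}: linearity gives $I_c((t-x_k)\vph_i,\vph_j)=\sum_{l}p_{il}^{(k)}I_c(\vph_l,\vph_j)$, i.e.\ $Q_k=P_k C$ in matrix form, and since $\det C(a,b,c)\neq 0$ under assumption \eqref{generic} by Fact~\ref{intersection}, $C$ is invertible and $P_k=Q_k C^{-1}$. One should note here that $I_c$ is a well-defined pairing on cohomology classes, so the identity \eqref{wa} in $H^1(T_x,\na_\om)$ may legitimately be paired against the classes $\vph_j$; that $(t-x_k)\vph_i$ indeed represents a well-defined cohomology class (so that the $p_{il}^{(k)}$ exist) is already asserted in the text preceding the theorem, since $\vph_0,\dots,\vph_m$ form a basis of $H^1(T_x,\na_\om)$.

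The only mild subtlety — and the step I would be most careful about — is the careful tracking of which argument each $Q_k$ and $C$ is evaluated at, since $P_0,P_{m+1}$ appear at shifted parameters $(a+1,b,c)$ and $P_k,P_0$ at $(a+1,b,c+1)$; one must check that the two $P$'s in each product of Lemma~\ref{contiguity-cor} are evaluated at the \emph{same} parameters so that the $C$-factors cancel cleanly, which is indeed the case. Beyond that, the proof is a routine assembly, so I would simply write: ``This follows by combining Lemma~\ref{contiguity-cor}, the relation $P_k=Q_k C^{-1}$ derived above, and the identity $\Ga(s+1)=s\Ga(s)$ to convert between $\tilde F$ and $F$.''
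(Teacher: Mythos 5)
Your proposal is correct and follows the paper's own route: the paper likewise obtains the theorem by combining Lemma~\ref{contiguity-cor} with the identity $P_k=Q_k C(a,b,c)^{-1}$ (from pairing \eqref{wa} with $\vph_j$ under $I_c$ and inverting $C$, which Fact~\ref{intersection} allows under \eqref{generic}), and then converting $\tilde F$ into $F$ via $\Ga(s+1)=s\,\Ga(s)$. Your explicit Gamma-factor bookkeeping, the resulting scalars $\frac{a-1}{c-a}$, $\frac{c-a-1}{c-1}$, $1$, and the cancellation of the $C$-factors at equal shifted parameters are exactly the computations the paper leaves implicit.
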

Therefore, we need to evaluate the matrices $Q_k(a,b,c;x)$ explicitly. 
\begin{Prop}\label{Q}
  We have 
  \begin{align*}
    Q_k(a,b,c;x)& \\ =\tpi \Biggl\{
    &\frac{1-x_k}{\al_{m+1}} N 
    +\frac{1}{\al_{m+2}} \diag\left(0,1-x_1,\dots ,1-x_m \right)
    \cdot N \cdot \diag\left(1,0,\dots ,0 \right)   \\
    &-\frac{1}{1-\al_{m+2}} \diag\left(1,0,\dots ,0 \right) \cdot N 
    \cdot \diag\left(0, 1-x_1,\dots ,1-x_m \right)   \\
    &+\diag\left(\frac{1-x_k}{\al_{m+2}}-
      \frac{1}{1-\al_{m+2}}\left( \frac{\sum_{p=1}^{m+1} \al_p x_p}{\al_{m+2}}+1 \right), 
      \frac{x_1-x_k}{\al_1},\dots,\frac{x_m-x_k}{\al_m} \right) 
    \Biggr\} . 
  \end{align*}
\end{Prop}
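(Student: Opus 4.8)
The plan is to first express, for each $k$ and each $i$, the class $(t-x_k)\vph_i$ in $H^1(T_x,\na_{\om})$ as a combination of the basis $\vph_0,\dots,\vph_m$ — that is, to determine the matrix $P_k(a,b,c;x)$ of (\ref{wa}) — and then to obtain $Q_k(a,b,c;x)$ from the identity $Q_k(a,b,c;x)=P_k(a,b,c;x)\,C(a,b,c)$ recorded above, inserting the explicit matrix $C(a,b,c)$ of Fact \ref{intersection}. For the rows with $1\le i\le m$ no cohomology is needed: writing $\vph_i=\vph_0-\vph_{i,m+2}$ with $\vph_{i,m+2}=\frac{dt}{t-x_i}$, one has $(t-x_k)\vph_0=dt+(1-x_k)\vph_0$ and $(t-x_k)\vph_{i,m+2}=dt+(x_i-x_k)\vph_{i,m+2}$, so that the terms $dt$ cancel and we obtain the identity of rational $1$-forms
\[
  (t-x_k)\vph_i=(1-x_i)\vph_0+(x_i-x_k)\vph_i ,
\]
valid for all $0\le k\le m+1$ (for $i=k$ it reduces to $(t-x_i)\vph_i=(1-x_i)\vph_0$). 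Hence $p_{i,0}^{(k)}=1-x_i$, $p_{i,i}^{(k)}=x_i-x_k$, and the remaining entries of the $i$-th row vanish.

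The only row that genuinely requires the cohomology relations is $i=0$, where $(t-x_k)\vph_0=dt+(1-x_k)\vph_0$ and the form $dt$ has a pole of order two at $t=\infty$ and must be reduced. I would use $\na_{\om}(t)=dt+t\,\om\equiv 0$ in $H^1(T_x,\na_{\om})$: since $\om=\sum_{r=0}^{m+1}\frac{\al_r}{t-x_r}\,dt$ with $x_0=0$ and $\sum_{r=0}^{m+1}\al_r=-\al_{m+2}$ (both from (\ref{alpha})), expanding $t\,\om$ yields
\[
  (1-\al_{m+2})\,dt\equiv-\sum_{r=1}^{m+1}\frac{\al_r x_r}{t-x_r}\,dt ,
\]
and, since $\al_{m+2}=a\notin\Z$ by (\ref{generic}),
\[
  dt\equiv-\frac{1}{1-\al_{m+2}}\sum_{r=1}^{m+1}\frac{\al_r x_r}{t-x_r}\,dt .
\]
Combining this with $\frac{dt}{t-x_r}=\vph_{r,m+2}=\vph_0-\vph_r$ for $1\le r\le m$, $\frac{dt}{t-x_{m+1}}=\vph_0$, and $x_{m+1}=1$, we rewrite $(t-x_k)\vph_0$ in the basis $\vph_0,\dots,\vph_m$, which finishes the determination of $P_k(a,b,c;x)$. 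I expect this reduction at $t=\infty$ to be the one step where care is needed; the rest is bookkeeping.

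Finally I would form $Q_k(a,b,c;x)=P_k(a,b,c;x)\,C(a,b,c)$ with $C(a,b,c)=\tpi\{\,\al_{m+1}^{-1}N+\diag(\al_{m+2}^{-1},\al_1^{-1},\dots,\al_m^{-1})\,\}$, simplify the entries using $x_0=0$, $x_{m+1}=1$ and $\sum_{r=1}^{m}\al_r x_r+\al_{m+1}=\sum_{r=1}^{m+1}\al_r x_r$, and then repackage the entrywise expressions into the displayed matrix form: for instance $\diag(0,1-x_1,\dots,1-x_m)\,N\,\diag(1,0,\dots,0)$ is the matrix whose $(i,0)$-entry equals $1-x_i$ for $i\ge1$ and which vanishes elsewhere, and similarly $\diag(1,0,\dots,0)\,N\,\diag(0,1-x_1,\dots,1-x_m)$ has $(0,j)$-entry $1-x_j$ for $j\ge1$ and zeros otherwise. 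Alternatively one can bypass $P_k$: after the reduction above $(t-x_k)\vph_i$ is a logarithmic $1$-form whose residues at $x_0,\dots,x_{m+1}$ and at $\infty$ are immediate, and $I_c((t-x_k)\vph_i,\vph_j)$ then follows from Fact \ref{intersection} by bilinearity, since the forms $\vph_{p,q}$ span all logarithmic $1$-forms with poles contained in $\{x_0,\dots,x_{m+1},\infty\}$. Either way, the only genuine obstacle is the $i=0$ reduction, and the concluding simplification, though a little lengthy, is routine.
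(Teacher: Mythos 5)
Your proposal is correct: I checked that your exact rational identity $(t-x_k)\vph_i=(1-x_i)\vph_0+(x_i-x_k)\vph_i$ for $1\le i\le m$, and your row-$0$ reduction via $\na_{\om}(t)\sim 0$, lead after multiplying by $C(a,b,c)$ precisely to the four entry types $Q_k(0,0)$, $Q_k(0,j)$, $Q_k(i,0)$, $Q_k(i,j)$ displayed in the Proposition. The key cohomological input is the same as the paper's, namely the relation $dt\sim-\frac{1}{1-\al_{m+2}}\sum_{p=1}^{m+1}\al_p x_p\,\vph_{p,m+2}$ obtained from $\na_{\om}(t)$, but the bookkeeping afterwards is genuinely different. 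The paper stays in the forms $\vph_{p,m+2}$: it reduces $(t-x_k)\vph_{l,m+2}$ in cohomology, evaluates $I_c\left((t-x_k)\vph_{l,m+2},\vph_j\right)$ directly from Fact \ref{intersection}, and assembles the entries of $Q_k$ by writing $\vph_0=\vph_{m+1,m+2}$, $\vph_i=\vph_{m+1,m+2}-\vph_{i,m+2}$; no $P_k$ is computed at this stage, since the paper's overall strategy is to \emph{recover} $P_k$ as $Q_kC^{-1}$. You instead determine $P_k$ first by explicit reduction to the basis $\vph_0,\dots,\vph_m$ (observing, correctly, that only the $i=0$ row needs any cohomology, because the $dt$ terms cancel in the other rows), and then obtain $Q_k=P_kC$, which is legitimate since that identity was already established from (\ref{wa}) by bilinearity of $I_c$. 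Your route has the minor advantage of producing $P_k$ — the matrix actually needed for the contiguity relations — without inverting $C$, and it confines the intersection-theoretic input to the single matrix $C(a,b,c)$; the paper's route is the one that scales to situations where an explicit reduction to a chosen basis is not readily available, which is the point of the intersection-number method it follows. Your closing alternative (computing $I_c((t-x_k)\vph_i,\vph_j)$ by bilinearity from the reduced logarithmic form) is essentially the paper's own proof. The only caveat is that the final simplification you defer as ``routine'' does require the identities $x_{m+1}=1$, $x_0=0$ and $\sum_{r=1}^{m}\al_r x_r+\al_{m+1}=\sum_{r=1}^{m+1}\al_r x_r$ exactly as you indicate; carried out, it reproduces the stated matrix.
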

We will give the proof in the next section. 
Note that 
\begin{align*}
  \diag(p_0,\dots,p_m) \cdot N\cdot \diag(q_0,\dots,q_m)
  =\mat{cccc}{p_0 q_0&p_0 q_1&\cdots&p_0q_m \\ 
    p_1 q_0&p_1 q_1&\cdots&p_1 q_m \\
    \vdots&\vdots&\vdots&\vdots \\
    p_m q_0&p_m q_1&\cdots&p_m q_m }. 
\end{align*}
\begin{Rem}\label{detQ}
  The determinant of $Q_k (a,b,c;x)$ is as follows: 
  \begin{align*}
    \det Q_k (a,b,c;x)
    =(2 \pi \sqrt{-1})^{m+1} 
    \cdot \frac{\al_0 (1+\de_{k,0}\al_0)}
    {\DS \prod_{\substack{j=1 \\ j\neq k} }^{m+2} \al_j \cdot (\al_{m+2} -1)}
    \cdot \prod_{\substack{j=0 \\ j\neq k}}^{m+1} (x_j-x_k) .
  \end{align*}
\end{Rem}
\begin{Ex}
  If $m=2$, the matrices $C(a,b,c)$ and $Q_k (a,b,c;x)$ are as follows: 
  \begin{align*}
    &C(a,b,c;x)
    =\tpi \left\{  
      \frac{1}{\al_3}\mat{ccc}{1&1&1 \\ 1&1&1 \\ 1&1&1}
      +\mat{ccc}
      {\frac{1}{\al_4}&0&0 \\ 
        0&\frac{1}{\al_1}&0 \\ 
        0&0&\frac{1}{\al_2}}
    \right\} ,\\
    &Q_k(a,b,c;x) \\
    &=\tpi \left\{  
      \frac{1-x_k}{\al_3}\mat{ccc}{1&1&1 \\ 1&1&1 \\ 1&1&1}
      +\mat{ccc}
      {\frac{1-x_k}{\al_4}
        -\frac{1}{1-\al_4} \cdot \frac{\al_1 x_1+\al_2 x_2 +\al_3+\al_4}{\al_4}&
        -\frac{1-x_1}{1-\al_4}&-\frac{1-x_2}{1-\al_4} \\ 
        \frac{1-x_1}{\al_4}&\frac{x_1-x_k}{\al_1}&0 \\ 
        \frac{1-x_2}{\al_4}&0&\frac{x_2-x_k}{\al_2}}
    \right\} .
  \end{align*}
  The first equality in Theorem \ref{main} is written as 
  \begin{align*}
    &\mat{r}{F_D (a-1,b_1,b_2,c;x_1,x_2) \\
      \frac{1-x_1}{b_1}\cdot \frac{\pa}{\pa x_1} F_D (a-1,b_1,b_2,c;x_1,x_2) \\
      \frac{1-x_2}{b_2}\cdot \frac{\pa}{\pa x_2} F_D (a-1,b_1,b_2,c;x_1,x_2)} \\
    &=\mat{ccc}{\frac{-b_1 x_1 -b_2 x_2+c-a}{c-a}
      &\frac{b_1 x_1}{c-a}&\frac{b_2 x_2}{c-a} \\ 
      \frac{(a-1)(1-x_1)}{c-a}&\frac{(a-1)(x_1-1)}{c-a}&0 \\ 
      \frac{(a-1)(1-x_2)}{c-a}&0&\frac{(a-1)(x_2-1)}{c-a}}
    \mat{r}{F_D (a,b_1,b_2,c;x_1,x_2) \\
      \frac{1-x_1}{b_1}\cdot \frac{\pa}{\pa x_1} F_D (a,b_1,b_2,c;x_1,x_2) \\
      \frac{1-x_2}{b_2}\cdot \frac{\pa}{\pa x_2} F_D (a,b_1,b_2,c;x_1,x_2)} .
  \end{align*}
  The $3\times 3$ matrix on the right-hand side is equal to 
  $D_a (a,b,c;x)=\frac{a-1}{c-a} \cdot Q_3(a,b,c;x) \cdot C(a,b,c)^{-1}$. 
\end{Ex}

\section{Proof of Proposition \ref{Q}}
\label{sec-proof}
In this section, we evaluate the intersection numbers 
that are the entries of $Q_k (a,b,c;x)$, 
by using Fact \ref{intersection}. 

We denote $\vph \sim \psi$, if $\vph$ is 
$\na_{\om}$-cohomologous to $\psi$, 
that is, 
\begin{align*}
  \vph \sim \psi &\Longleftrightarrow 
  \vph = \psi + \na_{\om} f \quad 
  {\rm for \ some} \  f \in \CE^0(T_x) , \\ 
  &\Longleftrightarrow 
  \vph \ {\rm and} \ \psi \ 
  {\rm give \ the \ same \ element \ in}\ H^1(T_x,\na_{\om}). 
\end{align*}
\begin{Lem}
  $$
  dt \sim -\frac{1}{1-\al_{m+2}}\sum_{p=1}^{m+1} 
  \al_p x_p \vph_{p,m+2}.
  $$
\end{Lem}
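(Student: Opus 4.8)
The plan is to compute $\nabla_{\om}(t^j)$ for small powers of $t$ and solve for $dt$ modulo coboundaries. Recall $\om = d\log u_x = \sum_{i=0}^{m+1} \frac{\al_i}{t-x_i}\,dt$. The key identity is
\begin{align*}
  \nabla_{\om}(t) &= dt + t\,\om = dt + \sum_{i=0}^{m+1} \frac{\al_i t}{t-x_i}\,dt
  = dt + \sum_{i=0}^{m+1} \al_i\,dt + \sum_{i=0}^{m+1} \frac{\al_i x_i}{t-x_i}\,dt,
\end{align*}
where I have written $\frac{t}{t-x_i} = 1 + \frac{x_i}{t-x_i}$. Now $\sum_{i=0}^{m+1}\al_i = 1 - \al_{m+2}$ by the definition (\ref{alpha}) of the exponents (since all $m+3$ of them sum to $1$ — indeed $\al_0 + \sum_{k=1}^m \al_k + \al_{m+1} + \al_{m+2} = (-c+\sum b_j) + (-\sum b_j) + (c-a) + a = 0$, wait, that gives $0$; so actually $\sum_{i=0}^{m+1}\al_i = -\al_{m+2}$). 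Let me recompute: we need the coefficient of $dt$ to come out as $1 - \al_{m+2}$ for the stated formula, so the relation used is $1 + \sum_{i=0}^{m+1}\al_i = 1 - \al_{m+2}$, i.e. $\sum_{i=0}^{m+2}\al_i = 0$, which is exactly what the parameters in (\ref{alpha}) satisfy. Also note $x_0 = 0$, so the $i=0$ term in $\sum \al_i x_i/(t-x_i)$ vanishes, and $\frac{dt}{t-x_i} = \vph_{i,m+2}$ for $i = 1,\dots,m+1$.

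Putting these together,
\begin{align*}
  \nabla_{\om}(t) = (1-\al_{m+2})\,dt + \sum_{p=1}^{m+1} \al_p x_p\,\vph_{p,m+2}.
\end{align*}
Since $\nabla_{\om}(t)$ is a coboundary, $t \in \CE^0(T_x)$, we get $(1-\al_{m+2})\,dt \sim -\sum_{p=1}^{m+1}\al_p x_p\,\vph_{p,m+2}$, and dividing by $1-\al_{m+2}$ (which is nonzero: $1 - \al_{m+2} = 1 - a \notin \Z$ would need care, but in fact $\al_{m+2} = a \notin \Z$ by (\ref{generic}), so $1 - a \neq 0$) yields the claim.

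There is essentially no obstacle here; the only point requiring attention is the bookkeeping of which exponent relation is invoked ($\sum_{i=0}^{m+2}\al_i = 0$, equivalently $\sum_{i=0}^{m+1}\al_i = -\al_{m+2}$, so that $1 + \sum_{i=0}^{m+1}\al_i = 1 - \al_{m+2}$) and the fact that $t$ extends to a smooth function on $T_x$ so that $t^j \in \CE^0(T_x)$ legitimately gives coboundaries. In the write-up I would simply display the computation of $\nabla_{\om}(t)$, invoke $\sum_{i=0}^{m+2}\al_i = 0$ and $x_0 = 0$, and divide. Analogous computations of $\nabla_{\om}(t^j)$ for $j \geq 2$ will be needed to reduce the higher-degree forms $(t-x_k)\vph_i$ appearing in Proposition \ref{Q}, but the present lemma is the base case.
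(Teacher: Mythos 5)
Your proof is correct and is essentially the paper's own argument: both compute $\na_{\om}(t)$, split $\frac{t}{t-x_p}=1+\frac{x_p}{t-x_p}$, and use $\sum_{p=0}^{m+2}\al_p=0$ together with $x_0=0$ to get $(1-\al_{m+2})\,dt+\sum_{p=1}^{m+1}\al_p x_p\,\vph_{p,m+2}\sim 0$. Your additional remarks (nonvanishing of $1-\al_{m+2}$ via (\ref{generic}), $t\in\CE^0(T_x)$) are fine and only make explicit what the paper leaves implicit.
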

\begin{proof}
  This lemma follows from 
  \begin{align*}
    0 \sim \na_{\om} (t)
    &=dt+\sum_{p=0}^{m+1} \al_p \frac{t}{t-x_p}dt
    =dt+\sum_{p=0}^{m+1} \al_p \frac{t-x_p+x_p}{t-x_p}dt \\
    &=(1+\sum_{p=0}^{m+1} \al_p)dt + \sum_{p=0}^{m+1} \al_p x_p \frac{dt}{t-x_p}
    =(1-\al_{m+2})dt + \sum_{p=1}^{m+1} \al_p x_p \vph_{p,m+2}.
  \end{align*}
  Here, we use $x_0=0$ and $\sum_{p=0}^{m+2}\al_p=0$.
\end{proof}
Then, we have
\begin{align*}
  (t-x_k)\cdot \vph_{l,m+2} &=\frac{t-x_k}{t-x_l}dt
  =\frac{t-x_l+x_l-x_k}{t-x_l}dt \\
  &\sim (x_l-x_k)\vph_{l,m+2} -\frac{1}{1-\al_{m+2}}\sum_{p=0}^{m+1} 
  \al_p x_p \vph_{p,m+2}.
\end{align*}
Fact \ref{intersection} and a
straightforward calculation show the following lemma. 
\begin{Lem}
  \label{lem-intersection}
  \begin{align*}
    &I_c \left( (t-x_k)\vph_{l,m+2},\vph_j \right) \\ &= \left\{
        \begin{array}{l}
          \tpi \left( (x_l-x_k)
            \left( \frac{\de_{l,m+1}}{\al_{m+1}}+\frac{1}{\al_{m+2}} \right) 
            -\frac{1}{1-\al_{m+2}} 
            \left( \frac{\sum_{p=1}^{m+1} \al_p x_p}{\al_{m+2}}+1 \right) 
          \right) \quad (j=0), \\
          \tpi \left( (x_l-x_k)\frac{\de_{l,m+1}-\de_{l,j}}{\al_l}
            -\frac{1-x_j}{1-\al_{m+2}}\right) \quad (1\leq j \leq m).
        \end{array}
      \right.
  \end{align*}
\end{Lem}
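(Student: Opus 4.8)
The plan is to feed the cohomological expansion of $(t-x_k)\vph_{l,m+2}$ obtained just above into the bilinear form $I_c$ and to evaluate the resulting elementary intersection numbers by Fact \ref{intersection}. Concretely, we have just shown
\begin{align*}
  (t-x_k)\cdot \vph_{l,m+2}
  \sim (x_l-x_k)\vph_{l,m+2}
  -\frac{1}{1-\al_{m+2}}\sum_{p=1}^{m+1}\al_p x_p \vph_{p,m+2},
\end{align*}
where the $p=0$ term has been dropped since $x_0=0$. Because $I_c$ is bilinear and well defined on cohomology classes, pairing against $\vph_j$ gives
\begin{align*}
  I_c\bigl((t-x_k)\vph_{l,m+2},\vph_j\bigr)
  =(x_l-x_k)\,I_c(\vph_{l,m+2},\vph_j)
  -\frac{1}{1-\al_{m+2}}\sum_{p=1}^{m+1}\al_p x_p\,I_c(\vph_{p,m+2},\vph_j).
\end{align*}
Thus the whole computation reduces to the numbers $I_c(\vph_{l,m+2},\vph_j)$, which I would read off from Fact \ref{intersection} after unwinding the abbreviations $\vph_0=\vph_{m+1,m+2}$ and $\vph_j=\vph_{m+1,j}$ for $1\leq j\leq m$.

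For the case $j=0$, Fact \ref{intersection} yields $I_c(\vph_{l,m+2},\vph_{m+1,m+2})=\tpi\bigl(\tfrac{\de_{l,m+1}}{\al_l}+\tfrac{1}{\al_{m+2}}\bigr)$, using $\de_{l,m+2}=0$ and $\de_{m+2,m+2}=1$. Here I would record the small bookkeeping point that $\tfrac{\de_{l,m+1}}{\al_l}=\tfrac{\de_{l,m+1}}{\al_{m+1}}$, since the delta is nonzero only when $l=m+1$; this makes the first term agree with the factor $(x_l-x_k)\bigl(\tfrac{\de_{l,m+1}}{\al_{m+1}}+\tfrac1{\al_{m+2}}\bigr)$ in the statement. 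For the sum, inserting the same formula and collapsing the delta gives $\sum_{p=1}^{m+1}\al_p x_p\,I_c(\vph_{p,m+2},\vph_0)=\tpi\bigl(x_{m+1}+\tfrac{1}{\al_{m+2}}\sum_{p=1}^{m+1}\al_p x_p\bigr)$; substituting $x_{m+1}=1$ produces the bracketed factor $\bigl(\tfrac{\sum_{p=1}^{m+1}\al_p x_p}{\al_{m+2}}+1\bigr)$ of the lemma.

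For $1\leq j\leq m$ the computation is slightly shorter: Fact \ref{intersection} gives $I_c(\vph_{l,m+2},\vph_{m+1,j})=\tpi\,\tfrac{\de_{l,m+1}-\de_{l,j}}{\al_l}$, the $\al_{m+2}$-contribution vanishing because $\de_{m+2,m+1}=\de_{m+2,j}=0$. This directly supplies the first term of the stated formula. Feeding the same expression into the sum and collapsing both deltas gives $\sum_{p=1}^{m+1}\al_p x_p\,I_c(\vph_{p,m+2},\vph_j)=\tpi\,(x_{m+1}-x_j)=\tpi\,(1-x_j)$, so the second term becomes $-\tpi\,\tfrac{1-x_j}{1-\al_{m+2}}$, matching the lemma.

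Since the argument is entirely mechanical once the expansion is in hand, I do not expect any genuine obstacle; the only place that requires care is the index bookkeeping in the Kronecker-delta sums. Specifically, I would keep in mind that $l$ and $p$ range over $\{0,\dots,m+1\}$ while $j$ is either the distinguished value $0$ or lies in $\{1,\dots,m\}$, and I would repeatedly use $x_0=0$, $x_{m+1}=1$, and the identity $\de_{l,m+1}/\al_l=\de_{l,m+1}/\al_{m+1}$ to collapse the sums to the compact closed forms displayed in the statement.
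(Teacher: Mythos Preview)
Your proposal is correct and follows exactly the route the paper has in mind: the paper merely states that ``Fact~\ref{intersection} and a straightforward calculation show the following lemma,'' and your write-up is precisely that straightforward calculation, carried out with the cohomological expansion of $(t-x_k)\vph_{l,m+2}$ displayed just before the lemma. The bookkeeping points you flag (using $x_0=0$, $x_{m+1}=1$, and $\de_{l,m+1}/\al_l=\de_{l,m+1}/\al_{m+1}$) are exactly the ones needed, and there are no gaps.
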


\begin{proof}[Proof of Proposition \ref{Q}]
  Let $Q_k(i,j)$ be the $(i,j)$ entry of 
  $Q_k (a,b,c;x)$, that is, 
  $Q_k(i,j)=I_c \left( (t-x_k)\cdot \vph_i,\vph_j \right)$. 
  For $1 \leq i,j \leq m$, we have 
    \begin{align*}
      Q_k(0,0)&=I_c \left( (t-x_k)\cdot \vph_{m+1,m+2},\vph_0 \right) \\
      &=\tpi \left(  
        \frac{1-x_k}{\al_{m+1}}+\frac{1-x_k}{\al_{m+2}}
        -\frac{1}{1-\al_{m+2}} \left( \frac{\sum_{p=1}^{m+1} \al_p x_p}{\al_{m+2}}+1 \right) 
      \right), \\
      Q_k(0,j)&=I_c \left( (t-x_k)\cdot \vph_{m+1,m+2},\vph_j \right) \\
      &=\tpi \left(  
        \frac{1-x_k}{\al_{m+1}}-\frac{1-x_j}{1-\al_{m+2}} 
      \right), \\
      Q_k(i,0)&=I_c \left( (t-x_k)\cdot \vph_{m+1,m+2},\vph_0 \right) 
      -I_c \left( (t-x_k)\cdot \vph_{i,m+2},\vph_0 \right) \\
      &= \tpi \left( \frac{1-x_k}{\al_{m+1}}+\frac{1-x_i}{\al_{m+2}} \right), \\
      Q_k(i,j)&=I_c \left( (t-x_k)\cdot \vph_{m+1,m+2},\vph_j \right)
      -I_c \left( (t-x_k)\cdot \vph_{i,m+2},\vph_j \right)\\
      &=\tpi \left( \frac{1-x_k}{\al_{m+1}}+\frac{x_i-x_k}{\al_i}\de_{i,j} \right) ,
    \end{align*}
  by Lemma \ref{lem-intersection}. 
  These equalities imply Proposition \ref{Q}.   
\end{proof}

\section{Differential equations and solutions}
\label{sec-DE}
Lauricella's $F_D (a,b,c;x)$ satisfies the differential equations
\begin{align*}
  & \left[ \theta_i ( \theta +c-1 ) -x_i (\theta +a) (\theta_i +b_i) \right] f(x)=0
  \quad (1 \leq i \leq m),\\
  & \left[ (x_i -x_j) \pa_i \pa_j -b_j \pa_i +b_i \pa_j \right] f(x)=0 
  \quad (1\leq i < j \leq m),
\end{align*}
where 
$\pa_i :=\frac{\pa}{\pa x_i}$, 
$\theta_i := x_i \pa_i$, and 
$\theta := \sum_{j=1}^m \theta_j$. 
The system generated by them is called Lauricella's 
hypergeometric system $E_D (a,b,c)$ of differential equations. 
It is known that 
the $A$-hypergeometric system associated with the matrix 
$A(\De_1 \times \De_m)$ can be transformed into 
the system $E_D(a,b,c)$, 
and combinatorial methods for constructing a fundamental set 
of solutions to the $A$-hypergeometric system are known \cite{GKZ}, \cite{SST}. 
Thus, we can 
use the general method for constructing series solutions
to $A$-hypergeometric systems to obtain a fundamental set of 
solutions to $E_D (a,b,c)$ 
with generic parameters $(a,b,c)$.

\begin{Fact}[{\cite[Section 3.3]{GKZ}}, {\cite[Section 1.5]{SST}}]
  For $1\leq k \leq m$, we put 
  \begin{align*}
    f^{(k)}(a,b,c;x):=&
    \prod_{l=1}^{k-1}x_l^{-b_l} 
    \cdot x_k^{\sum_{l=1}^{k-1}b_l -c+1} \\
    &\cdot \sum_{n_1 ,\ldots ,n_m=0}^{\infty}
    \frac{1}{\Ga_{n_1,\ldots ,n_m}^{(k)}(a,b,c)} 
    \cdot \prod_{l=1}^{k-1} \left( \frac{x_k}{x_l} \right)^{n_l}
    \cdot x_k^{n_k}
    \cdot \prod_{l=k+1}^m \left( \frac{x_l}{x_k} \right)^{n_l},
  \end{align*}
  where 
  \begin{align*}
    &\Ga_{n_1,\ldots ,n_m}^{(k)}(a,b,c) \\
    &:= \GA{c-a-n_k} \cdot \prod_{\substack{1 \leq l \leq m \\ l\neq k}}\GA{1-b_l-n_l} 
    \cdot \prod_{l=1}^m \GA{1+n_l}\\
    &\cdot \GA{-\sum_{l=1}^{k} b_l+c-\sum_{l=1}^{k}n_l+\sum_{l=k+1}^m n_l}
    \cdot \GA{2+\sum_{l=1}^{k-1} b_l-c+\sum_{l=1}^{k}n_l-\sum_{l=k+1}^m n_l} .
  \end{align*}
  Then, each $f^{(k)}(a,b,c;x)$ is a solution to $E_D(a,b,c)$. 
  Moreover, the set of $F_D (a,b,c;x)$ and $f^{(k)}(a,b,c;x)$ $(1\leq k \leq m)$ 
  is a set of fundamental solutions to $E_D (a,b,c)$. 
\end{Fact}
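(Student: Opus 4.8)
The plan is to derive the statement from the structure theory of $A$-hypergeometric systems, together with an elementary linear-independence argument based on leading exponents. First I would make explicit the transformation, recalled in the text, identifying $E_D(a,b,c)$ with the $A$-hypergeometric system $H_A(\beta)$ attached to $A=A(\De_1\times\De_m)$ for a suitable $\beta=\beta(a,b,c)$: the configuration $A$ consists of the $2(m+1)$ vertices of $\De_1\times\De_m$, it spans a lattice of rank $m+2$, so the lattice of relations has rank $N-(m+2)=m$, matching the $m$ summation indices $n_1,\dots,n_m$, and each $\Gamma$-series solution of $H_A(\beta)$ carries $N=2m+2$ Gamma factors in its denominator. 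For generic $\beta$, a regular triangulation of $A$ produces, by the recipe of \cite{GKZ} and \cite[\S1.5]{SST}, one Nilsson-class ($\Gamma$-series) solution per maximal simplex, and these span the solution space. The next step is bookkeeping: choose the triangulation whose $m+1$ maximal simplices are indexed so that one of them (say the $0$-th) gives $F_D$ itself, and check that the $\Gamma$-series attached to the $k$-th simplex, after dividing out the overall monomial prefactor and rewriting the multinomial Gamma-quotient, is exactly the displayed series $f^{(k)}(a,b,c;x)$ — in particular that the product of the $2m+2$ prescribed Gamma factors equals $\Gamma^{(k)}_{n_1,\dots,n_m}(a,b,c)$. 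Pulling back along the transformation then shows each $f^{(k)}$ solves $E_D(a,b,c)$. Alternatively, and without the GKZ machinery, one substitutes the ansatz $x^{\rho^{(k)}}\sum_n c_n x^n$ into the two families of operators generating $E_D(a,b,c)$, reads off the indicial exponent $\rho^{(k)}$ and the first-order recurrence for $c_n$, and verifies that it is solved by $c_n=1/\Gamma^{(k)}_{n_1,\dots,n_m}(a,b,c)$.

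For the fundamental-set claim I would argue by dimension. The system $E_D(a,b,c)$ is holonomic of rank $m+1$ (consistently with the fact recalled in Section \ref{sec-cohomology} that $H^1(T_x,\na_{\om})$ is $(m+1)$-dimensional), so it suffices to exhibit $m+1$ linearly independent local solutions. The leading exponent vectors of our $m+1$ candidates are
\begin{align*}
  \rho^{(0)} &= (0,\dots,0) \quad (\text{the exponent vector of } F_D), \\
  \rho^{(k)} &= \bigl(-b_1,\dots,-b_{k-1},\ {\textstyle\sum_{l=1}^{k-1}b_l-c+1},\ 0,\dots,0\bigr) \quad (1\leq k \leq m),
\end{align*}
and for generic $(a,b,c)$ they are pairwise incongruent modulo $\Z^m$: comparing $\rho^{(k)}$ with $\rho^{(j)}$ for any $0\le j<k$, the $k$-th coordinate of $\rho^{(k)}$ is $\sum_{l<k}b_l-c+1$, which is required to be non-integral, whereas that of $\rho^{(j)}$ is $0$. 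Moreover the leading coefficient $1/\Gamma^{(k)}_{0,\dots,0}(a,b,c)$ of $f^{(k)}$ is finite and nonzero for generic parameters. Since each $f^{(k)}$ is of the form $x^{\rho^{(k)}}\cdot(\text{series in }x\text{ with nonzero constant term})$ and all its monomials lie in the single coset $\rho^{(k)}+\Z^m$, a nontrivial linear relation $\sum_k c_k f^{(k)}=0$, grouped by $\Z^m$-coset, would force each $c_k f^{(k)}=0$, hence each $c_k=0$. Thus the $m+1$ solutions are independent and form a fundamental set.

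I expect the routine parts to be the substitution/recurrence computation (or, on the GKZ route, the index-matching between the combinatorial recipe and the closed form of $\Gamma^{(k)}_{n_1,\dots,n_m}$), and the genuinely delicate point to be the passage from formal series to linearly independent analytic solutions on a common domain: the $f^{(k)}$ converge a priori only in different nested regions (iterated annuli dictated by the chosen triangulation), while $F_D$ converges near the origin. I would handle this either by invoking the general fact that the canonical series solutions of a regular holonomic system along a normal-crossing divisor, with leading exponents in distinct $\Z^m$-cosets, are linearly independent over $\C$, or by working throughout in the Nilsson ring of formal multivalued power series, where the coset decomposition makes independence immediate; this is the one step that is not purely mechanical.
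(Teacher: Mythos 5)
Your proposal is correct and follows essentially the same route as the paper, which states this as a Fact cited from \cite{GKZ} and \cite{SST} without proof: namely, transform $E_D(a,b,c)$ into the $A$-hypergeometric system for $A(\De_1\times\De_m)$ and take the $\Gamma$-series attached to the $m+1$ maximal simplices of the (unimodular, staircase) regular triangulation, which for generic parameters converge on a common region $1>|x_1|\gg\cdots\gg|x_m|$ and are independent by the distinct-exponent (coset mod $\Z^m$) argument. Your added details — the direct recurrence verification and the explicit independence argument — are sound and simply flesh out what the cited references provide.
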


\section{Twisted cycles corresponding to solutions}
\label{sec-cycle}
We consider the twisted homology group $H_1 (T_x, u_x)$ on $T_x$ 
that is associated with the multivalued function $u_x(t)$. 
For the definition of the twisted homology groups, 
refer to \cite{AK} and \cite{M-FD}. 
By \cite{AK}, $H_1 (T_x,u_x)$ has $(m+1)$ dimensions. 
If $(a,b,c;x)$ are generic, then 
the local solution space $Sol_x$ of $E_D (a,b,c)$ 
around $x$ can be identified with 
the twisted homology group $H_1 (T_x ,u_x)$ 
by the integration of $u_x \vph_0$; see \cite[Proposition 4.1]{M-FD}. 
Thus, there exists a twisted cycle that corresponds to 
the series solution $f^{(k)}(a,b,c;x)$. 
In this section, we construct such a cycle explicitly.

Let $\vep$ and $\xi$ be real numbers satisfying 
$$
0< \vep <\frac{1}{2}, \quad 
\xi < \min \left\{ \vep ,\frac{1}{1+\vep} \right\} .
$$
We construct the twisted cycle $r_k$ in $T_x$ with 
$x$ belonging to a small neighborhood of 
$$
x^{(k)}:=
(\xi ,\xi^2 ,\ldots ,\xi^{k-1} ,e^{-\pi \sqrt{-1}} \xi^k ,\xi^{k+1} ,\ldots ,\xi^m) .
$$
Once we construct the twisted cycle in $T_{x^{(k)}}$, 
this cycle is uniquely continued to the twisted cycle in each $T_x$. 
Thus, we may assume $x=x^{(k)}$. 
We put 
\begin{align*}
  S_x :=&\C -\left\{  
    \frac{x_k}{x_m}, \ldots ,\frac{x_k}{x_{k+1}},
    \frac{x_k}{x_{k-1}} , \ldots , \frac{x_k}{x_1} , x_k ,0,1
  \right\} , \\
  v_x (s):=&\prod_{l=1}^{k-1} \left( s-\frac{x_k}{x_l} \right)^{\al_l}
  \cdot \left( s-x_k  \right) ^{\al_{m+1}-1} 
  \cdot \prod_{l=k+1}^{m} \left( 1-\frac{x_l}{x_k} s \right)^{\al_l}  \\
  &\cdot s^{\al_{m+2}} \cdot (1-s)^{\al_k+1} \\
  =&\prod_{l=1}^{k-1} \left( 1-\frac{x_k}{x_l} \frac{1}{s} \right)^{\al_l}
  \cdot \left( 1-x_k \frac{1}{s} \right) ^{\al_{m+1}-1} 
  \cdot \prod_{l=k+1}^{m} \left( 1-\frac{x_l}{x_k} s \right)^{\al_l} \\
  &\cdot s^{\sum_{l=1}^{k-1}\al_l +\al_{m+1}+\al_{m+2}-1} \cdot (1-s)^{\al_k+1}.
\end{align*}
The last equality holds when $0 < s <1$. 
We define the twisted cycle $\tilde{r}_k$ that gives an element in $H_1 (S_x ,v_x)$. 
We put $\la_j :=e^{\tpi \al_j}$ and 
\begin{align*}
  \tilde{r}_k:=
  \frac{1}{\prod_{l=1}^{k-1}\la_l \cdot \la_{m+1} \la_{m+2} -1}C_0 \ot v_x
  +[\vep ,1-\vep] \ot v_x
  -\frac{1}{\la_k -1}C_1 \ot v_x . 
\end{align*}
Here, $C_0$ (resp. $C_1$) is the circle of center $0$ (resp. $1$) and radius $\vep$ 
with starting point $\vep$ (resp. $1-\vep$), which turns in the counterclockwise direction, 
and the branch of $v_x$ is obtained by the analytic continuation along $C_0$ (resp. $C_1$). 
Let us verify that $\tilde{r}_k$ is a twisted cycle. 
Let $D_i$ be the disk whose boundary is $C_i$ $(i=0,\ 1)$. 
Since 
$$
\left| \frac{x_k}{x_{k-1}} \right| =\xi <\vep <1 
<\frac{1}{\xi}=\left| \frac{x_k}{x_{k+1}} \right| , 
$$
we have 
\begin{align*}
  D_0 \cap (\C -S_x) =\left\{ \frac{x_k}{x_{k-1}} , \ldots , \frac{x_k}{x_1} , x_k ,0 \right\} ,\quad 
  D_1 \cap (\C -S_x) =\left\{ 1 \right\} ;
\end{align*}
see Figure \ref{fig-cycle}. 
Then, the difference between the branches of $v_x$ 
at the ending and starting points of the circle $C_0$ (resp. $C_1$) is 
$\prod_{l=1}^{k-1}\la_l \cdot \la_{m+1} \la_{m+2}$ 
(resp. $\la_k$), 
which implies that $\tilde{r}_k$ is a twisted cycle
(cf. \cite[Example 2.1]{AK}). 
\begin{figure}[h]
  \centering{
    \includegraphics[scale=0.9]{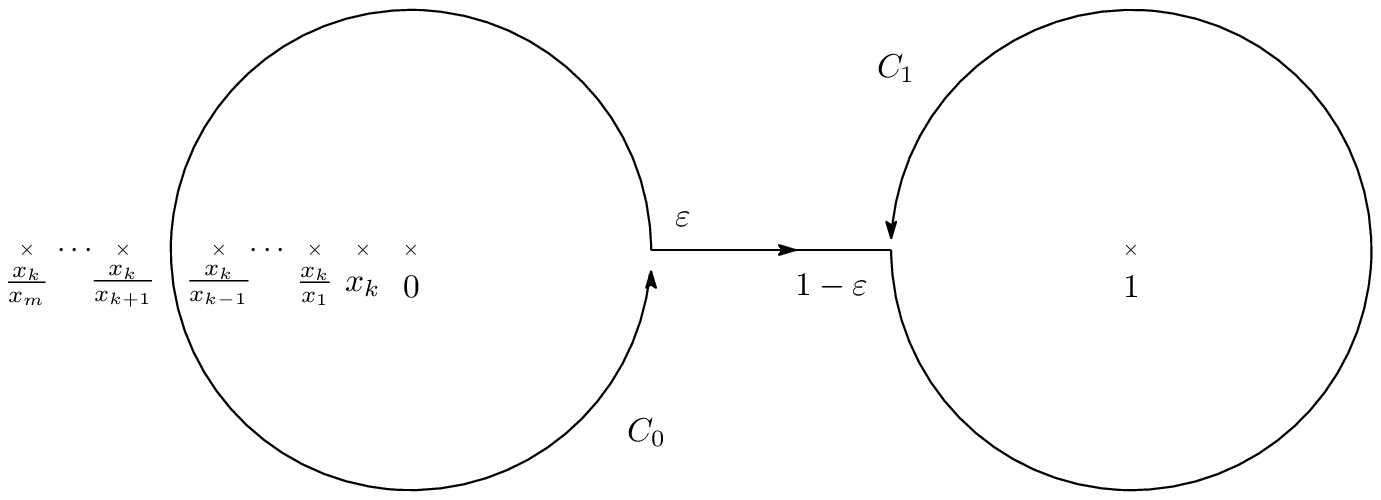} 
  }
  \caption{$\tilde{r}_k$}
  \label{fig-cycle}
\end{figure}

\begin{Lem}\label{homology-lem}
  \begin{align}
    \label{homology-lem-1}
    \int_{\tilde{r}_k} v_x \frac{ds}{s(1-s)}
    =& \GA{c-a} 
    \cdot \prod_{l=1}^m \GA{1-b_l} 
    \cdot \GA{\sum_{l=1}^{k-1}b_l -c}
    \cdot \GA{1-\sum_{l=1}^{k-1}b_l +c} \\
    & \nonumber
    \cdot \sum_{n_1 ,\ldots ,n_m=0}^{\infty}
    \frac{1}{\Ga_{n_1,\ldots ,n_m}^{(k)}(a,b,c)} 
    \cdot \prod_{l=1} ^{k-1} \left( \frac{x_k}{x_l} \right)^{n_l}
    \cdot x_k^{n_k}
    \cdot \prod_{l=k+1}^{m} \left( \frac{x_l}{x_k} \right)^{n_l} .
  \end{align}
\end{Lem}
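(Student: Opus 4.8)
The plan is to expand $v_x(s)$ into a power series and integrate over the three pieces of $\tilde{r}_k$ term by term, each term collapsing to a (regularized) beta integral.

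First I would use the last expression for $v_x(s)$ in its definition, valid for $0<s<1$. On a neighborhood of the support of $\tilde{r}_k$ each of the factors $1-\frac{x_k}{x_l}\frac1s$, $1-x_k\frac1s$ $(l<k)$ and $1-\frac{x_l}{x_k}s$ $(l>k)$ is close to $1$: this is exactly what the conditions $0<\vep<\frac12$ and $\xi<\min\{\vep,\frac1{1+\vep}\}$ guarantee at $x=x^{(k)}$, since there $\vep\le|s|\le 1+\vep$ while $|x_k/x_l|,|x_k|,|x_l/x_k|\le\xi$, so that $|x_k/(x_l s)|,|x_k/s|\le\xi/\vep<1$ and $|x_l s/x_k|\le\xi(1+\vep)<1$. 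Hence each factor may be expanded by the binomial theorem, the resulting multiple series converging uniformly; multiplying by $\frac{ds}{s(1-s)}$ one obtains
\begin{align*}
  v_x(s)\,\frac{ds}{s(1-s)}
  =\sum_{n_1,\dots,n_m\geq 0}c_{n}(x)\;
  s^{\,E-2-\sum_{l=1}^{k}n_l+\sum_{l=k+1}^{m}n_l}\,(1-s)^{\al_k}\,ds ,
\end{align*}
where $E:=\sum_{l=1}^{k-1}\al_l+\al_{m+1}+\al_{m+2}=-\sum_{l=1}^{k-1}b_l+c$ and $c_{n}(x)$ is the product of the relevant binomial coefficients times the monomial $\prod_{l<k}(x_k/x_l)^{n_l}\cdot x_k^{n_k}\cdot\prod_{l>k}(x_l/x_k)^{n_l}$. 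By uniform convergence I may integrate over $\tilde{r}_k$ termwise.

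For the $n$-th term put $\mu:=E-1-\sum_{l\le k}n_l+\sum_{l>k}n_l$ and $\nu:=\al_k+1$, so the term is $c_n(x)\,s^{\mu-1}(1-s)^{\nu-1}\,ds$. The key point is that $e^{\tpi\mu}=e^{\tpi E}=\prod_{l=1}^{k-1}\la_l\cdot\la_{m+1}\la_{m+2}$ is independent of $n$ and equals the monodromy factor occurring as the coefficient of $C_0$ in $\tilde{r}_k$, while $e^{\tpi\nu}=\la_k$ is the one occurring with $C_1$; hence, for each $n$, $\tilde{r}_k$ is precisely the regularized cycle computing the beta integral of $s^{\mu-1}(1-s)^{\nu-1}\,ds$, and by \cite[Example 2.1]{AK}
\begin{align*}
  \int_{\tilde{r}_k}c_n(x)\,s^{\mu-1}(1-s)^{\nu-1}\,ds
  =c_n(x)\,B(\mu,\nu)=c_n(x)\,\frac{\GA{\mu}\GA{\nu}}{\GA{\mu+\nu}} .
\end{align*}
(The identity persists for all $\mu,\nu\notin\Z$ by analytic continuation; the needed non-vanishing $\prod_{l=1}^{k-1}\la_l\cdot\la_{m+1}\la_{m+2}\neq1$ and $\la_k\neq1$ holds for generic parameters, the latter by (\ref{generic}).)

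Finally I would rewrite the binomial coefficients in $c_n(x)$ as $\prod_{l\neq k}\frac{(b_l,n_l)}{n_l!}$ times $\frac{(1-c+a,n_k)}{n_k!}$ and convert everything — these Pochhammer symbols together with $\GA{\mu}$, $\GA{\nu}$, $\GA{\mu+\nu}$ — into the reciprocal Gamma factors forming $\Ga^{(k)}_{n_1,\dots,n_m}(a,b,c)$, using the reflection formula $\GA{z}\GA{1-z}=\pi/\sin(\pi z)$ repeatedly. Concretely, $\mu+\nu=-\sum_{l\le k}b_l+c-\sum_{l\le k}n_l+\sum_{l>k}n_l$, so $\GA{\mu+\nu}$ is one of the two distinguished factors of $\Ga^{(k)}_n$; $\GA{\nu}=\GA{1-b_k}$ supplies the $l=k$ term of $\prod_{l=1}^m\GA{1-b_l}$; since $1-\mu=2+\sum_{l<k}b_l-c+\sum_{l\le k}n_l-\sum_{l>k}n_l$, reflection turns $\GA{\mu}$ into $1/\GA{2+\sum_{l<k}b_l-c+\sum_{l\le k}n_l-\sum_{l>k}n_l}$ times the prefactor $\GA{\sum_{l<k}b_l-c}\GA{1-\sum_{l<k}b_l+c}$, up to a sign $(-1)^{\sum n_l}$; and $\frac{(b_l,n_l)}{n_l!}=(-1)^{n_l}\frac{\GA{1-b_l}}{\GA{1-b_l-n_l}\GA{1+n_l}}$, $\frac{(1-c+a,n_k)}{n_k!}=(-1)^{n_k}\frac{\GA{c-a}}{\GA{c-a-n_k}\GA{1+n_k}}$. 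The $(-1)^{\sum n_l}$ from the Pochhammer conversions cancels the $(-1)^{\sum n_l}$ from the reflection applied to $\GA{\mu}$; what remains in the denominator is exactly $\Ga^{(k)}_{n_1,\dots,n_m}(a,b,c)$ and in the numerator exactly the stated prefactor, so summing over $n$ gives (\ref{homology-lem-1}). The main obstacle is this last bookkeeping: many shifted Gamma arguments and several $\pm$ signs must be tracked, and the computation closes only because all of these data — including the monodromy coefficient $e^{\tpi E}$ used in the previous step — fit together precisely.
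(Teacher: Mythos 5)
Your proposal is correct and follows essentially the same route as the paper's proof: binomial expansion of the factors of $v_x$ (justified by the bounds $\xi/\vep<1$, $\xi(1+\vep)<1$ on the support of $\tilde{r}_k$), termwise integration, identification of $\tilde{r}_k$ with the regularized interval $(0,1)$ so that each term is a beta integral $\GA{\mu}\GA{\nu}/\GA{\mu+\nu}$, and finally the Pochhammer--Gamma bookkeeping via the reflection formula, with the signs cancelling exactly as you describe. Your explicit check that the monodromy factors $e^{\tpi E}$ and $\la_k$ match the coefficients in $\tilde{r}_k$ is a slightly more detailed justification of the step the paper states as ``by the construction of $\tilde{r}_k$,'' but it is the same argument.
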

\begin{proof}
  Note that if $s$ belongs to $C_0 \cup [\vep ,1-\vep] \cup C_1$, 
  it satisfies $\vep < |s| < 1+\vep$. 
  Since 
  \begin{align*}
    & \left| \frac{x_k}{x_l}\frac{1}{s} \right| 
    <\xi^{k-l} \cdot \frac{1}{\vep}<1 \quad (1 \leq l \leq k-1), \\
    & \left| x_k \frac{1}{s} \right| 
    <\xi^k \cdot \frac{1}{\vep}<1 , \\
    & \left| \frac{x_l}{x_k} s \right| 
    <\xi^{l-k} \cdot (1+\vep)<1 \quad (k+1 \leq l \leq m) ,
  \end{align*}
  the following power series expansions are uniformly and absolutely convergent 
  on $C_0 \cup [\vep ,1-\vep] \cup C_1$: 
  \begin{align*}
    & \left( 1-\frac{x_k}{x_l} \frac{1}{s} \right)^{\al_l} 
    =\sum_{n_l=0}^{\infty} \frac{(-\al_l,n_l)}{n_l !}
    \left( \frac{x_k}{x_l} \frac{1}{s} \right)^{n_l}
    \quad (1 \leq l \leq k-1), \\
    & \left( 1-x_k \frac{1}{s} \right) ^{\al_{m+1}-1}  
    =\sum_{n_k=0}^{\infty} \frac{(1-\al_{m+1},n_k)}{n_k !}
    \left( x_k \frac{1}{s} \right)^{n_k}, \\
    & \left( 1-\frac{x_l}{x_k} s \right)^{\al_l} 
    =\sum_{n_l=0}^{\infty} \frac{(-\al_l,n_l)}{n_l !}
    \left( \frac{x_l}{x_k} s \right)^{n_l}  \quad (k+1 \leq l \leq m).
  \end{align*}
  We replace the power functions on  
  the left-hand side of (\ref{homology-lem-1}) 
  by these expansions, and  
  exchange the sum and the integral. 
  Then, the coefficient of 
  $\DS \prod_{l=1}^{k-1} \left( \frac{x_k}{x_l} \right)^{n_l}
  \cdot x_k^{n_k}
  \cdot \prod_{l=k+1}^{m} \left( \frac{x_l}{x_k} \right)^{n_l}$ 
  is 
  \begin{align}
    \label{homology-lem-2}
    &\frac{(1-\al_{m+1},n_k)}{n_k !}
    \prod_{l\neq k} \frac{(-\al_l,n_l)}{n_l !} \\
    & \nonumber 
    \cdot \int_{\tilde{r}_k} 
    s^{\sum_{l=1}^{k-1}\al_l +\al_{m+1}+\al_{m+2}-1-\sum_{l=1}^{k}n_l+\sum_{l=k+1}^{m}n_l} 
    \cdot (1-s)^{\al_k+1} \frac{ds}{s(1-s)} .
  \end{align}
  By the construction of $\tilde{r}_k$, 
  the twisted cycle $\tilde{r}_k$ of this integral can be  
  identified with the usual regularization of the open interval $(0,1)$ 
  loaded with the multivalued function 
  $$
  s^{\sum_{l=1}^{k-1}\al_l +\al_{m+1}+\al_{m+2}-1-\sum_{l=1}^{k}n_l+\sum_{l=k+1}^{m}n_l} 
  \cdot (1-s)^{\al_k+1}
  $$
  on $\C -\{ 0,1 \}$. 
  Hence the integral in (\ref{homology-lem-2}) is equal to
  \begin{align*}
    \frac{\GA{\sum_{l\leq k-1}\al_l +\al_{m+1}+\al_{m+2}-1-\sum_{l\leq k}n_l+\sum_{l\geq k+1}n_l}  
      \GA{\al_k+1}}
    {\GA{\sum_{l\leq k}\al_l +\al_{m+1}+\al_{m+2}-\sum_{l\leq k}n_l+\sum_{l\geq k+1}n_l}} .
  \end{align*}
  By (\ref{alpha}) and $(a,n)=\GA{a+n}/ \GA{a}$, 
  (\ref{homology-lem-2}) is equal to 
  \begin{align*}
    & \frac{\GA{1-c+a+n_k}}{\GA{1-c+a}}
    \prod_{l\neq k} \frac{\GA{b_l+n_l}}{\GA{b_l}} \\
    & \cdot 
    \frac{\GA{-\sum_{l\leq k-1}b_l +c-1-\sum_{l\leq k}n_l+\sum_{l\geq k+1}n_l}  
      \GA{1-b_k}}
    {\GA{-\sum_{l\leq k}b_l +c-\sum_{l\leq k}n_l+\sum_{l\geq k+1}n_l}}
    \prod_{l=1}^m \frac{1}{\GA{1+n_l}} .
  \end{align*}
  By using $\GA{z} \GA{1-z} = \pi / \sin (\pi z)$, 
  we obtain, 
  for example, 
  \begin{align*}
    \frac{\GA{b_l+n_l}}{\GA{b_l}}
    &=\frac{1}{\GA{b_l}}\cdot \frac{\pi}{\GA{1-b_l-n_l} \sin \pi (b_l+n_l)} \\
    &=(-1)^{n_l} \frac{1}{\GA{1-b_l-n_l}}\frac{\pi}{\GA{b_l} \sin \pi b_l}
    =(-1)^{n_l} \frac{\GA{1-b_l}}{\GA{1-b_l-n_l}} .
  \end{align*}  
  In an analogous way, other Gamma functions with $n_l$'s in the numerator 
  can be moved to the denominator. 
  Thus, we obtain the lemma. 
\end{proof}

We will construct a twisted cycle standing for the series solution 
$f^{(k)}(a,b,c;x)$ by the bijection 
$$
\iota :S_x \to T_x ; \quad s \mapsto t=\frac{x_k}{s} .
$$
Let $r_k$ be the twisted cycle defined as $r_k:=\iota_{*}(\tilde{r}_k)$, 
which gives an element in $H_1 (T_x ,u_x)$. 
\begin{Th}\label{homology-th}
    \begin{align*}
    \int_{r_k} u_x \vph_0 
    =&  
    \GA{c-a} 
    \cdot \prod_{l=1}^m \GA{1-b_l} 
    \cdot \GA{\sum_{l=1}^{k-1}b_l -c}
    \cdot \GA{1-\sum_{l=1}^{k-1}b_l +c} \\
    & \cdot e^{\pi \sqrt{-1}(\sum_{l=1}^{k-1}b_l -c+a)}
    \cdot f^{(k)}(a,b,c;x). 
  \end{align*}
\end{Th}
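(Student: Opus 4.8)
The plan is to deduce the formula from Lemma~\ref{homology-lem} by transporting it along the biholomorphism $\iota\colon S_x\to T_x$, $s\mapsto t=x_k/s$, which is exactly the map used to define $r_k=\iota_{*}(\tilde{r}_k)$. Two things have to be done: check that $\iota$ intertwines the two twisted structures, so that $\int_{r_k}u_x\vph_0=\int_{\tilde{r}_k}\iota^{*}(u_x\vph_0)$, and then compute $\iota^{*}(u_x\vph_0)$ explicitly.

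For the first point, I would pull back $t=x_k/s$ and group the factors, using $x_0=0$, $x_{m+1}=1$, $\sum_{p=0}^{m+2}\al_p=0$ and $\al_l=-b_l$; each rewriting $x_k/s-x_i=(x_k-x_is)/s$ produces a power of $s$, a power of $-x_i$ (or of $x_k$), and the corresponding factor of $v_x$. Collecting everything, $\iota^{*}u_x=u_x(x_k/s)$ equals $v_x(s)$ times $\dfrac{x_k-s}{1-s}$ times a nonzero constant. Since $\dfrac{x_k-s}{1-s}$ is single-valued, holomorphic and nowhere vanishing on $S_x$ — its zero $s=x_k$ and its pole $s=1$ having both been deleted from $S_x$ — the map $\iota$ identifies $H_1(S_x,v_x)$ with $H_1(T_x,u_x)$, and pushing the integral forward along $\iota$ gives $\int_{r_k}u_x\vph_0=\int_{\tilde{r}_k}\iota^{*}(u_x\vph_0)$.

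For the second point, $\iota^{*}\vph_0=\dfrac{d(x_k/s)}{x_k/s-1}=\dfrac{-x_k\,ds}{s(x_k-s)}$, so the factors $x_k-s$ cancel and $\iota^{*}(u_x\vph_0)=K(x)\,v_x(s)\dfrac{ds}{s(1-s)}$ for a scalar $K(x)$. Simplifying $K(x)$ with $\al_{m+1}=c-a$, $\al_{m+2}=a$ and $\sum_{p=0}^{m+2}\al_p=0$, the outcome should be
\[
  K(x)=\prod_{l=1}^{k-1}x_l^{-b_l}\cdot x_k^{\sum_{l=1}^{k-1}b_l-c+1}\cdot e^{\pi\sqrt{-1}(\sum_{l=1}^{k-1}b_l-c+a)},
\]
whose monomial part is precisely the prefactor of $f^{(k)}(a,b,c;x)$ in its definition. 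Substituting this into Lemma~\ref{homology-lem} and recognizing the remaining series as the series part of $f^{(k)}$ then yields the stated identity.

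The hard part will be the branch bookkeeping hidden in $K(x)$. The branch of $u_x$ is the one normalized by \eqref{integral} (positive on $(1,\infty)$), and one must follow it through the substitution $t=x_k/s$ along $C_0\cup[\vep,1-\vep]\cup C_1$, pinning down the arguments of $x_k-s$, of $x_k/x_l-s$ for $l<k$, and of $x_k$ itself. This is exactly why the base point is taken to be $x^{(k)}=(\xi,\dots,\xi^{k-1},e^{-\pi\sqrt{-1}}\xi^{k},\xi^{k+1},\dots,\xi^{m})$ rather than $(\xi,\dots,\xi^{m})$: with the $k$-th coordinate carrying argument $-\pi$, all the accumulated factors of $-1$ collapse into the single phase $e^{\pi\sqrt{-1}(\sum_{l=1}^{k-1}b_l-c+a)}$, and $\iota_{*}\tilde{r}_k$ becomes exactly the regularization corresponding to $f^{(k)}$. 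The remaining term-by-term comparison of $u_x(x_k/s)$ with $v_x(s)$ is routine.
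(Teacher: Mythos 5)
Your proposal is correct and follows essentially the same route as the paper's proof: change variables by $t=x_k/s$, show $\iota^{*}(u_x\vph_0)$ equals $v_x(s)\,ds/(s(1-s))$ times the constant $\prod_{l<k}x_l^{-b_l}\,x_k^{\sum_{l<k}b_l-c+1}\,e^{\pi\sqrt{-1}(\sum_{l<k}b_l-c+a)}$ (using $\sum_p\al_p=0$ and $\arg x_k=-\pi$ for the phase), and then invoke Lemma \ref{homology-lem}. The paper simply carries out the branch bookkeeping you defer, arriving at the same factor $e^{-\pi\sqrt{-1}(\sum_{l<k}\al_l+\al_{m+1})}=e^{\pi\sqrt{-1}(\sum_{l<k}b_l-c+a)}$.
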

\begin{proof}
  Note that $\arg (x_k)=-\pi$. 
  We have 
  \begin{align*}
    &u_x (\iota (s)) \cdot \iota^{*} \vph_0 \\
    &= \left( \frac{x_k}{s} \right)^{\al_0} 
    \cdot \left( \frac{x_k}{s}-1 \right)^{\al_{m+1}}
    \cdot \prod_{l=1}^{k-1} \left( \frac{x_k}{s}-x_l \right)^{\al_l}
    \cdot \left( \frac{x_k}{s}-x_k  \right) ^{\al_k} 
    \cdot \prod_{l=k+1}^{m} \left( \frac{x_k}{s}-x_l \right)^{\al_l} \\
    & \quad \cdot \frac{-x_k ds}{s^2 (\frac{x_k}{s}-1)} \\
    &=- \prod_{l=1}^{k-1} x_l^{\al_l} 
    \cdot x_k^{\al_0 +\sum_{l=k}^{m}\al_l +1} 
    \cdot s^{-\al_0 -\sum_{l=k}^{m}\al_l-1} 
    \cdot \left( \frac{x_k}{s}-1 \right)^{\al_{m+1}-1} \\
    &\quad \cdot \prod_{l=1}^{k-1} \left( \frac{x_k}{x_l s}-1 \right)^{\al_l}
    \cdot \left( 1-s  \right) ^{\al_k+1} 
    \cdot \prod_{l=k+1}^{m} \left( 1-\frac{x_l s}{x_k} \right)^{\al_l} 
    \cdot \frac{ds}{s(1-s)}\\
    &=e^{-\pi \sqrt{-1} (\sum_{l=1}^{k-1} \al_m +\al_{m+1})}
    \cdot \prod_{l=1}^{k-1} x_l^{\al_l} 
    \cdot x_k^{\al_0 +\sum_{l=k}^{m}\al_l+1}
    \cdot v_x (s)\frac{ds}{s(1-s)} .
  \end{align*}
  Here, we use $-\al_0-\sum_{l=k}^{m} \al_m= \sum_{l=1}^{k-1}\al_l +\al_{m+1}+\al_{m+2}$. 
  By Lemma \ref{homology-lem} and the relations 
  \begin{align*}
    &\al_l =-b_l \quad (1 \leq l \leq k-1),\\
    &\al_0 +\sum_{l=k}^{m}\al_l+1 
    =-\sum_{l=1}^{k-1}\al_l -\al_{m+1}-\al_{m+2}+1=\sum_{l=1}^{k-1}b_l -c+1, \\
    &\sum_{l=1}^{k-1} \al_m +\al_{m+1}
    =-\sum_{l=1}^{k-1} b_l +c-a ,
  \end{align*}
  we obtain the identity of the theorem. 
\end{proof}

By replacing the cycle $(1,\infty)$ in Section \ref{sec-contiguity} 
with $r_k$, 
we can obtain the contiguity relations of $f^{(k)}$. 
We put
\begin{align*}
  F^{(k)} (a,b,c;x)
  :=\TP{ \left( f^{(k)}(a,b,c;x), \frac{x_1-1}{-b_1}\frac{\pa}{\pa x_1}f^{(k)}(a,b,c;x),\
      \ldots ,\frac{x_m-1}{-b_m}\frac{\pa}{\pa x_m}f^{(k)}(a,b,c;x) \right)} .
\end{align*}
By Theorem \ref{homology-th} and \cite{M-FD}, we have 
\begin{align*}
  &\tilde{F}^{(k)}(a,b,c;x) 
  :=\TP{ \left( \int_{r_k} u_x \vph_0 ,
      \ldots ,\int_{r_k} u_x \vph_m \right)} \\
  &=\GA{c-a} 
  \cdot \prod_{l=1}^m \GA{1-b_l} 
  \cdot \GA{\sum_{l=1}^{k-1}b_l -c}
  \cdot \GA{1-\sum_{l=1}^{k-1}b_l +c} 
  \cdot e^{\pi \sqrt{-1}(\sum_{l=1}^{k-1}b_l -c+a)} 
  \cdot F^{(k)} (a,b,c;x) .
\end{align*}
It is clear that Lemmas \ref{contiguity-lem} and \ref{contiguity-cor} 
hold even if $\tilde{F}$ is replaced by $\tilde{F}^{(k)}$. 
Therefore, we obtain the following corollary. 
\begin{Cor}\label{contiguity-F^k}
    \begin{align*}
    F^{(k)}(a-1,b,c;x)&=D_a^{(k)} (a,b,c;x)F^{(k)}(a,b,c;x), \\
    F^{(k)}(a,b,c-1;x)&=D_c^{(k)} (a,b,c;x)F^{(k)}(a,b,c;x), \\
    F^{(k)}(a,b-e_l,c;x)&=D_l^{(k)} (a,b,c;x)F^{(k)}(a,b,c;x) \quad  (1\leq l \leq m), 
  \end{align*}
  where 
  \begin{align*}
    D_a^{(k)} (a,b,c;x)&:=\frac{1}{a-c} \cdot Q_{m+1}(a,b,c;x) \cdot C(a,b,c)^{-1}, \\
    D_c^{(k)} (a,b,c;x)&:=(c-a-1) \cdot Q_0(a+1,b,c;x) \cdot Q_{m+1}(a+1,b,c;x)^{-1}, \\
    D_l^{(k)} (a,b,c;x)&:=\frac{1}{1-b_l} \cdot Q_l(a+1,b,c+1;x) \cdot Q_0(a+1,b,c+1;x)^{-1} .
  \end{align*}
  In fact, $D^{(k)}_{\bullet}$ is independent of $k$. 
\end{Cor}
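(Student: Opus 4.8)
The plan is to deduce the contiguity relations of $F^{(k)}$ from those of $\tilde{F}^{(k)}$, which come for free, and then to control the scalar factor relating the two. Since the relations (\ref{wa}) are identities in $H^1(T_x,\na_{\om})$, integrating them over the twisted cycle $r_k$ instead of over $(1,\infty)$ shows, as noted just before the statement, that Lemmas~\ref{contiguity-lem} and~\ref{contiguity-cor} remain valid with $\tilde{F}$ replaced by $\tilde{F}^{(k)}$. Writing $P_{\bullet}=P_{\bullet}(a,b,c;x)=Q_{\bullet}(a,b,c;x)\,C(a,b,c)^{-1}$ and using $P_0P_{m+1}^{-1}=Q_0Q_{m+1}^{-1}$ and $P_lP_0^{-1}=Q_lQ_0^{-1}$, this gives
\begin{align*}
  \tilde{F}^{(k)}(a-1,b,c;x)&=Q_{m+1}(a,b,c;x)\,C(a,b,c)^{-1}\,\tilde{F}^{(k)}(a,b,c;x),\\
  \tilde{F}^{(k)}(a,b,c-1;x)&=Q_0(a+1,b,c;x)\,Q_{m+1}(a+1,b,c;x)^{-1}\,\tilde{F}^{(k)}(a,b,c;x),\\
  \tilde{F}^{(k)}(a,b-e_l,c;x)&=Q_l(a+1,b,c+1;x)\,Q_0(a+1,b,c+1;x)^{-1}\,\tilde{F}^{(k)}(a,b,c;x).
\end{align*}

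Next, by Theorem~\ref{homology-th} one has $\tilde{F}^{(k)}(a,b,c;x)=g_k(a,b,c)\,F^{(k)}(a,b,c;x)$ with
\begin{align*}
  g_k(a,b,c):=\GA{c-a}\cdot\prod_{l=1}^{m}\GA{1-b_l}\cdot\GA{\sum_{l=1}^{k-1}b_l-c}\cdot\GA{1-\sum_{l=1}^{k-1}b_l+c}\cdot e^{\pi\sqrt{-1}(\sum_{l=1}^{k-1}b_l-c+a)},
\end{align*}
and this scalar is independent of $x$; that is why the rescaling is harmless, since $g_k$ then commutes with the operators $\frac{x_l-1}{-b_l}\frac{\pa}{\pa x_l}$ defining the entries of $F^{(k)}$. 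Substituting and cancelling $g_k$ in the three identities above turns the statement into
\begin{align*}
  D_a^{(k)}&=\frac{g_k(a,b,c)}{g_k(a-1,b,c)}\cdot Q_{m+1}(a,b,c;x)\cdot C(a,b,c)^{-1},\\
  D_c^{(k)}&=\frac{g_k(a,b,c)}{g_k(a,b,c-1)}\cdot Q_0(a+1,b,c;x)\cdot Q_{m+1}(a+1,b,c;x)^{-1},\\
  D_l^{(k)}&=\frac{g_k(a,b,c)}{g_k(a,b-e_l,c)}\cdot Q_l(a+1,b,c+1;x)\cdot Q_0(a+1,b,c+1;x)^{-1},
\end{align*}
so that it only remains to evaluate the three ratios of $g_k$-values.

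This evaluation, carried out with the help of $\GA{s+1}=s\,\GA{s}$, is the only substantive step, and the key point is the fate of the three $k$-indexed factors $\GA{\sum_{l<k}b_l-c}$, $\GA{1-\sum_{l<k}b_l+c}$, $e^{\pi\sqrt{-1}(\sum_{l<k}b_l-c+a)}$ of $g_k$. For the $a$-shift, only $\GA{c-a}$ and the exponential involve $a$, and the exponential contributes the $k$-independent factor $e^{\pi\sqrt{-1}}=-1$, so the ratio is $\frac{1}{c-a}\cdot(-1)=\frac{1}{a-c}$. For the $c$-shift and for the $b_l$-shift with $l<k$, all three $k$-indexed factors change, but the two $k$-indexed Gamma factors $\GA{\sum_{l<k}b_l-c}$ and $\GA{1-\sum_{l<k}b_l+c}$ together contribute the factor $-1$, which cancels the $e^{\pi\sqrt{-1}}=-1$ coming from the exponential; the remaining factors then yield $g_k(a,b,c)/g_k(a,b,c-1)=c-a-1$ and $g_k(a,b,c)/g_k(a,b-e_l,c)=\frac{1}{1-b_l}$, exactly the scalars appearing in Theorem~\ref{main}. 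For the $b_l$-shift with $l\ge k$, only $\GA{1-b_l}$ changes and the ratio is again $\frac{1}{1-b_l}$. Since none of the three ratios depends on $k$, the stated formulas for $D_a^{(k)}$, $D_c^{(k)}$, $D_l^{(k)}$ follow, together with the assertion that $D_{\bullet}^{(k)}$ is independent of $k$. I expect the only real obstacle to be routine bookkeeping: tracking which Gamma factors of $g_k$ contain each shifted parameter, and checking the sign in the cancellation of the two $k$-indexed Gamma factors for the $c$- and $(l<k)$-shifts.
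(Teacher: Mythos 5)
Your argument is correct and is essentially the paper's own: integrate the cohomological identities over $r_k$ so that Lemmas \ref{contiguity-lem} and \ref{contiguity-cor} hold with $\tilde{F}$ replaced by $\tilde{F}^{(k)}$, then use the $x$-independent scalar from Theorem \ref{homology-th} to pass to $F^{(k)}$ and compute the ratios of Gamma and exponential factors. The paper leaves that last Gamma-ratio bookkeeping (including the cancellation of the two $k$-indexed Gamma factors against $e^{\pm\pi\sqrt{-1}}$ and the case split $l<k$ versus $l\geq k$) implicit, and your computation of it checks out.
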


\section{Application---Normalizing constant for $2\times (m+1)$ contingency tables}
\label{sec-application}
Contiguity relations of $F_D$ and $f^{(k)}$ are 
applied to the numerical evaluation of 
the normalizing constant of 
the hypergeometric distribution of 
the $2\times (m+1)$ contingency tables 
with fixed marginal sums. 
In this section, we explain how our results are applied. 

We consider the $2 \times (m+1)$ contingency table
$$
u=\mat{cccc}{u_{10}&u_{11}&\cdots&u_{1m} \\ u_{20}&u_{21}&\cdots&u_{2m}} 
\in M_{2,m+1}(\Z_{\geq 0})
$$
with row sums $\be_1$ and $\be_2$ and columns sums $\ga_0,\ldots ,\ga_m$. 
We put $t:=\be_1+\be_2=\sum_{i=0}^m \ga_i$. 
We use the multi-index notation 
$$
p^u =\prod_{i=1}^2 \prod_{j=0}^m p_{ij}^{u_{ij}},\quad
u!=\prod_{i=1}^2 \prod_{j=0}^m u_{ij} ! ,
$$
where $p$ is the $2 \times (m+1)$ matrix variable. 
The polynomial 
$$
Z(\be,\ga ;p)=t! \sum_u \frac{p^u}{u!}
$$
is called the normalizing constant, 
where the sum is taken over all contingency tables $u$ with 
marginal sums $\be=(\be_1,\be_2)$ and 
$\ga=(\ga_0 ,\ldots ,\ga_m )$. 
It is a fundamental problem in statistics 
to evaluate $Z(\be, \ga; p)$ numerically, where 
$\be_i, \ga_j \in \Z_{\geq 0}$ and $p_{ij} \in \Q_{\geq 0}$. 

The normalizing constant $Z$ can be expressed 
by $F_D$ or $f^{(k)}$. 
To explain this, we will first define some notation. 
We put 
\begin{align*}
  &\CB_0 :=\{ (\be_1,\be_2, \ga_0 ,\ldots ,\ga_m ) \in (\Z_{>0})^{m+3} \mid 
  \be_1+\be_2=\sum_{i=0}^m \ga_i,\  \be_1 -\ga_0 \leq 0 \} , \\ 
  &\CB_k :=\{ (\be_1,\be_2 ,\ga_0 ,\ldots ,\ga_m ) \in (\Z_{>0})^{m+3} \mid 
  \be_1+\be_2=\sum_{i=0}^m \ga_i,\ 
  \be_1-\sum_{i=0}^{k-1}\ga_i >0 ,\  
  \be_1-\sum_{i=0}^{k}\ga_i \leq 0 \} ,
\end{align*}
where $1\leq k \leq m$. 
Then, 
$\{ (\be_1,\be_2, \ga_0 ,\ldots ,\ga_m ) \in (\Z_{>0})^{m+3} \mid 
\be_1+\be_2=\sum_{i=0}^m \ga_i  \}$ 
is the disjoint union of $\CB_0 ,\ldots ,\CB_m$. 
We also put 
\begin{align*}
  &\ell_1:=\mat{ccccc}{-1&1&0&\cdots&0 \\ 1&-1&0&\cdots&0} ,\quad 
  \ell_2:=\mat{cccccc}{-1&0&1&0&\cdots&0 \\ 1&0&-1&0&\cdots&0} ,\ldots ,\\ 
  &\ell_m:=\mat{ccccc}{-1&0&\cdots&0&1 \\ 1&0&\cdots&0&-1} ,\\
  &u_0:=\mat{ccccc}{\be_1&0&0&\cdots&0 \\ \ga_0-\be_1&\ga_1&\ga_2&\cdots&\ga_m}, \\
  &u_k:=\mat{ccccccc}{
    \ga_0&\cdots&\ga_{k-1}&\be_1-\sum_{i=0}^{k-1}\ga_i&0&\cdots&0 \\ 
    0&\cdots&0&\sum_{i=0}^{k}\ga_i -\be_1&\ga_{k+1}&\cdots&\ga_m
  } \quad
  (1\leq k \leq m), \\
  & x_i:=p^{\ell_i}=\frac{p_{1i}p_{20}}{p_{10}p_{2i}} .
\end{align*}
If $(\be_1,\be_2 ,\ga_0 ,\ldots ,\ga_m ) \in \CB_k$, then 
all of the entries of $u_k$ are non-negative integers, and hence 
it is one of the contingency tables with 
marginal sums $\be$ and $\ga$. 
By straightforward calculation, we can prove the following lemma. 
\begin{Lem}
  \begin{enumerate}
  \item If $(\be ,\ga) \in \CB_0$, then  
    \begin{align*}
      Z(\be,\ga;p)
      =\frac{t!}{u_0 !}
      \cdot p^{u_0} \cdot F_D(-\be_1 ,(-\ga_1 ,\ldots ,-\ga_m),\ga_0-\be_1+1 ;x_1,\ldots,x_m). 
    \end{align*}
  \item If $(\be ,\ga) \in \CB_k$ with $1\leq k \leq m$, then  
    \begin{align*}
      Z(\be,\ga;p)
      =t! \cdot p^{u_0} 
      \cdot f^{(k)}(-\be_1 ,(-\ga_1 ,\ldots ,-\ga_m),\ga_0-\be_1+1 ;x_1,\ldots,x_m) .
    \end{align*}
  \end{enumerate}
\end{Lem}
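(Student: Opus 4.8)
The plan is to reduce both parts to a direct combinatorial rewriting of the defining sum $Z(\be,\ga;p)=t!\sum_u p^u/u!$ by parametrizing the contingency tables with fixed margins via the lattice vectors $\ell_1,\dots,\ell_m$. Since a $2\times(m+1)$ table with row sums $\be=(\be_1,\be_2)$ and column sums $\ga=(\ga_0,\dots,\ga_m)$ is determined by its first row $(u_{10},\dots,u_{1m})$, which satisfies $\sum_j u_{1j}=\be_1$ and $0\le u_{1j}\le\ga_j$, I would first fix a base table (namely $u_0$ in case (1), $u_k$ in case (2)) and note that every other admissible table is obtained from the base one by adding a nonnegative integer combination $\sum_{i=1}^m n_i\ell_i$ that keeps all entries nonnegative; here $n_i$ records how much ``mass'' is moved into column $i$ relative to the base table. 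The key identity driving everything is $p^{u+\ell_i}=p^u\cdot p^{\ell_i}=p^u\cdot x_i$, so that $p^u=p^{u_{\mathrm{base}}}\cdot x_1^{n_1}\cdots x_m^{n_m}$ (in case (2) one gets the shifted monomials $(x_k/x_l)^{n_l}$ and $(x_l/x_k)^{n_l}$ exactly matching the series defining $f^{(k)}$). Thus the whole sum factors as $p^{u_{\mathrm{base}}}$ times a sum over the $n_i$'s of $t!/u!$, and it remains only to check that $t!/u!$, written in terms of the $n_i$, equals the hypergeometric coefficient.

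For part (1): when $(\be,\ga)\in\CB_0$ we have $\be_1\le\ga_0$, so $u_0$ has all nonnegative entries, and the admissible tables are exactly $u_0+\sum_i n_i\ell_i$ with $0\le n_i$ and (automatically, given $\be_1\le\ga_0$) all entries staying nonnegative as long as $\sum_i n_i\le\be_1$ and $n_i\le\ga_i$; I would compute $u!$ for such a table, obtaining a product of factorials of the entries, and then convert the ratio $t!/u!$ into Pochhammer symbols using $(\be_1-\sum n_i)!$ in the top-left, $(\ga_i-n_i)!$ and $n_i!$ elsewhere. After writing $\be_1!/(\be_1-\sum n_i)!=(-1)^{\sum n_i}(-\be_1,\,\sum n_i)$ and similarly $\ga_i!/(\ga_i-n_i)!$, and tracking the entry $\ga_0-\be_1+\sum n_i$ in the bottom-left (which produces the denominator Pochhammer $(\ga_0-\be_1+1,\sum n_i)$), the sum collapses to $\sum_{n}\frac{(-\be_1,\sum n_i)\prod(-\ga_i,n_i)}{(\ga_0-\be_1+1,\sum n_i)\prod n_i!}x^n$, which is precisely $F_D(-\be_1,(-\ga_1,\dots,-\ga_m),\ga_0-\be_1+1;x)$; the extra factor $t!/u_0!$ is pulled out front, giving the stated formula.

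For part (2): when $(\be,\ga)\in\CB_k$ the base table is $u_k$, whose ``pivot'' entries are $\be_1-\sum_{i=0}^{k-1}\ga_i>0$ and $\sum_{i=0}^k\ga_i-\be_1\ge 0$; the same rewriting applies, but now moving mass via $\ell_l$ with $l<k$ decreases column $l$ and increases the pivot column $k$ (hence the ratios $x_k/x_l$), while $\ell_l$ with $l>k$ does the reverse (ratios $x_l/x_k$), and $\ell_k$ itself contributes $x_k^{n_k}$. Expanding $t!/u!$ in terms of factorials of the entries of $u_k+\sum n_l\ell_l$ and converting every factorial-ratio into a Gamma-function ratio, one should land exactly on the shape $1/\Ga^{(k)}_{n_1,\dots,n_m}(a,b,c)$ with $(a,b,c)=(-\be_1,(-\ga_1,\dots,-\ga_m),\ga_0-\be_1+1)$ — this is the same bookkeeping that appears in the proof of Lemma~\ref{homology-lem}, where Gamma functions with the $n_l$ in the numerator are pushed to the denominator via $\GA{z}\GA{1-z}=\pi/\sin(\pi z)$. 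The prefactor here is just $t!\cdot p^{u_0}$ (not $t!/u_0!$, because the normalization of $f^{(k)}$ absorbs the remaining factorials), and comparing with the \emph{Fact} in Section~\ref{sec-DE} finishes the claim.

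The main obstacle is purely combinatorial: correctly identifying the range of the summation index $n=(n_1,\dots,n_m)$ and checking that the positivity/negativity constraints defining $\CB_k$ are exactly what make the base table $u_k$ admissible and the reparametrization a bijection onto all tables with the given margins. Once that bijection is pinned down, the conversion of $t!/u!$ into the (signed) Pochhammer or Gamma coefficients is a routine, if slightly lengthy, manipulation of factorials, entirely parallel to the computation already carried out in Lemma~\ref{homology-lem}.
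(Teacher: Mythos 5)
Your proposal is correct and is exactly the ``straightforward calculation'' the paper invokes without writing out: parametrize the tables with fixed margins by $(n_1,\ldots,n_m)$ relative to the base table $u_0$ (resp.\ $u_k$), use $p^{\ell_i}=x_i$ to factor the monomials, and identify $t!/u!$ with the $F_D$ Pochhammer coefficients (resp.\ with $1/\Ga^{(k)}_{n_1,\ldots,n_m}$, noting $p^{u_0}$ times the $x$-power prefactor of $f^{(k)}$ is $p^{u_k}$). One small simplification: at these integer parameters $\Ga^{(k)}_{n_1,\ldots,n_m}(a,b,c)$ equals $u!$ directly (and $1/\GA{\cdot}$ vanishes precisely at the non-admissible lattice points), so no reflection-formula bookkeeping as in Lemma \ref{homology-lem} is actually needed.
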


In \cite{OTT}, our contiguity relations are applied 
for the difference holonomic gradient method, 
which evaluates the numerical value of the column vector $F (a,b,c;x)$ or that of
$F^{(k)}(a,b,c;x)$, with $a, b_i \in \Z_{<0}$. 
For example, 
it follows from the below discussion of contiguity relations 
for integer parameters $a$, $b$, $c$ that 
we can easily evaluate the numerical value of $F(a,b,c;x)$ from 
that of $F(-1,b,c;x)$ by using the matrix $D_a$ in the contiguity relation. 
Note that 
$$
F_D (-1,b,c;x)=1-\sum_{i=1}^m \frac{b_i}{c} x_i .
$$
For details of the difference holonomic gradient method, 
see \cite{OTT}. 

We now consider the case in which the parameters are integers. 
Since $\be_1$, $\be_2$, $\ga_0$, $\ldots$, $\ga_m$ are integers, 
the parameters $(a,b,c)=(-\be_1 ,(-\ga_1 ,\ldots ,-\ga_m),\ga_0-\be_1+1)$ 
do not satisfy the condition (\ref{generic}). 
For the above application, 
we need to give the contiguity relations that are valid even when 
the parameters are integers.  

\begin{Prop}\label{contiguity-integer}
  \begin{enumerate}[(1)]
  \item If $(\be,\ga) \in \CB_0$, then the relation 
    \begin{align*}
      F(a-1,b,c;x)=\frac{a-1}{c-a} \cdot P_{m+1}(a,b,c;x)\cdot F(a,b,c;x)
    \end{align*}
    holds when the generic parameter vector is 
    specialized to an integral point 
    $(a,b,c) \to (-\be_1 ,(-\ga_1 ,\ldots ,-\ga_m),\ga_0-\be_1+1)$. 
  \item When $(\be,\ga) \in \CB_k$ with $1 \leq k \leq m$, 
    we consider the relation 
    \begin{align*}
      F^{(k)}(a-1,b,c-1;x)=-P_0(a,b,c;x) \cdot F^{(k)}(a,b,c;x).
    \end{align*}
    If $(\be_1+1 ,\be_2 ,\ga_0 ,\ldots ,\ga_m) \in \CB_k$, then 
    this relation 
    holds when the generic parameter vector is 
    specialized to an integral point 
    $(a,b,c) \to (-\be_1 ,(-\ga_1 ,\ldots ,-\ga_m),\ga_0-\be_1+1)$. 
  \end{enumerate}
\end{Prop}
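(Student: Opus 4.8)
The plan is to verify that the two contiguity relations in question continue to hold in the limit where the generic parameters degenerate to the given integral point, by tracking which Gamma factors in the relevant identities blow up or vanish. Recall that the contiguity relations of Theorem \ref{main} and Corollary \ref{contiguity-F^k} are derived from the cohomological relation (\ref{wa}) by integrating over the appropriate twisted cycle; the coefficient matrices $D_\bullet$ and $D_\bullet^{(k)}$ are rational functions in $(a,b,c)$ coming from the intersection numbers $Q_k$ and $C$, so they are manifestly well-defined at the integral point unless a denominator vanishes there. Thus the whole issue is twofold: first, to check that the matrices appearing (namely $P_{m+1}(a,b,c;x)$, equivalently $Q_{m+1}(a,b,c;x)C(a,b,c)^{-1}$, in part (1), and $P_0(a,b,c;x)=Q_0 C^{-1}$ in part (2)) remain finite at the specialized parameters; second, to justify that the limit of the integral identity is the naive specialization. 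For part (1) this means checking, using Fact \ref{intersection} and Remark \ref{detQ}, that $\det C(a,b,c)=(\tpi)^{m+1}(-\al_0)/\prod_{i=1}^{m+2}\al_i$ does not vanish or become infinite when $(a,b,c)=(-\be_1,(-\ga_1,\dots,-\ga_m),\ga_0-\be_1+1)$; here $\al_0=-c+\sum b_j=\be_1-\ga_0-1-\sum\ga_j$, which by $(\be,\ga)\in\CB_0$ together with $\be_1+\be_2=\sum_{i=0}^m\ga_i$ equals $-\be_2-1\ne 0$, and $\al_{m+2}=a=-\be_1<0$, $\al_{m+1}=c-a=\ga_0+1>0$, $\al_k=-b_k=\ga_k>0$, so all $\al_i$ are finite and nonzero and $C^{-1}$ exists at the integral point. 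For part (2), similarly, one checks via Remark \ref{detQ} that $\det Q_0(a,b,c;x)=(\tpi)^{m+1}\al_0(1+\al_0)/\bigl(\prod_{j=2}^{m+2}\al_j\cdot(\al_{m+2}-1)\bigr)\prod_{j=1}^{m+1}(x_j)$, and that under $(\be,\ga)\in\CB_k$ and the hypothesis $(\be_1+1,\be_2,\ga_0,\dots,\ga_m)\in\CB_k$ the quantities $\al_0$, $1+\al_0$, the $\al_j$ for $2\le j\le m+2$, and $\al_{m+2}-1$ are all nonzero (the shifted condition $\be_1+1$ is exactly what forces $1+\al_0\ne 0$, i.e. $\al_0\ne -1$).

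Next I would set up the limiting argument carefully. The cleanest route is: away from the integral point, Lemma \ref{contiguity-lem} (for part (1)) and the analogue with $\tilde F$ replaced by $\tilde F^{(k)}$ (for part (2)) give the relation between the $\tilde F$'s, which are $F$'s multiplied by products of Gamma factors. To pass to the relation between the $F$'s one cancels these Gamma factors, picking up the rational prefactors $(a-1)/(c-a)$ etc. via $\Ga(s+1)=s\Ga(s)$; this is the content of Theorem \ref{main} and Corollary \ref{contiguity-F^k}. The subtle point is that $F_D(a,b,c;x)$ and $f^{(k)}(a,b,c;x)$ themselves, as defined by their series, are genuinely holomorphic at these integer parameters — indeed at $(a,b,c)=(-\be_1,(-\ga_j),\ga_0-\be_1+1)$ the series $F_D$ terminates into the polynomial $Z(\be,\ga;p)/(\text{prefactor})$ of the Lemma in Section \ref{sec-application}, and likewise $f^{(k)}$ terminates. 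So both sides of the desired relations are honest holomorphic functions of $(a,b,c)$ near the integral point (once we know the matrices are finite there), and the relations hold on the dense open set where (\ref{generic}) is satisfied; by continuity they hold at the integral point as well. I would phrase this as: both sides extend holomorphically to a neighborhood of the specialized point, they agree on a dense subset, hence they agree.

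The step I expect to be the main obstacle is confirming that $F$ (resp. $F^{(k)}$) is actually holomorphic in $(a,b,c)$ at the integral point — not just the $0$-th entry, but the derivative entries $\frac{x_i-1}{\al_i}\pa_i F_D$. For $F_D$ this is fine because the series terminates; but the normalization divides $\pa_i F_D$ by $\al_i=-b_i=\ga_i$, which is a positive integer, so there is no pole from that. More delicate is that $\tilde F=\frac{\Ga(a)\Ga(c-a)}{\Ga(c)}F$ has Gamma factors that are singular at the integral point (since $a=-\be_1$ is a non-positive integer, $\Ga(a)$ has a pole), so one must not work with $\tilde F$ directly at the limit — one must use the already-cancelled form in Theorem \ref{main}/Corollary \ref{contiguity-F^k}, and it is the cancellation that is valid only for generic parameters, then extended by continuity. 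I would make this explicit: the identity $F(a-1,b,c;x)=D_a(a,b,c;x)F(a,b,c;x)$ of Theorem \ref{main} holds as an identity of meromorphic functions of $(a,b,c)$, both sides being holomorphic near the integral point by the above, so the specialization is legitimate; the scalar $(a-1)/(c-a)$ specializes to $(-\be_1-1)/(\ga_0+1)$ and $D_a=\frac{a-1}{c-a}Q_{m+1}C^{-1}$ matches the claimed formula. The same reasoning, using Corollary \ref{contiguity-F^k} and the sign/prefactor bookkeeping there (where $D_c^{(k)}=(c-a-1)Q_0 Q_{m+1}^{-1}$ and the relevant relation reads $F^{(k)}(a-1,b,c-1;x)=(\text{prefactor})\,P_0 F^{(k)}(a,b,c;x)$ with prefactor $-1$ after extracting $c-a-1=\ga_0$ and the $\Ga$-ratio), gives part (2) — with the extra hypothesis $(\be_1+1,\be_2,\ga)\in\CB_k$ precisely ensuring no pole of $Q_0^{-1}$ or of $f^{(k)}(a-1,b,c-1;x)$ appears.
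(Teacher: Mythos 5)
Your overall strategy is the same as the paper's (check that $C(a,b,c)^{-1}$ and the $Q$'s stay finite at the integral point, then pass to the limit from generic parameters using holomorphy in $(a,b,c)$ plus the identity theorem), but there is a genuine gap at the decisive step. You justify the holomorphy of both sides near the integral point by saying that the series $F_D$ and $f^{(k)}$ \emph{terminate at} the specialized parameters. Termination at the point only tells you what the function is \emph{at} $(a,b,c)=(-\be_1,(-\ga_1,\dots,-\ga_m),\ga_0-\be_1+1)$; to conclude by continuity from the dense set where (\ref{generic}) holds, you need joint holomorphy (or at least continuity) of $F$ and $F^{(k)}$, including the derivative entries, in a \emph{neighborhood} of the integral parameter point, where the series do not terminate. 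This is exactly what the paper's Lemma \ref{integer-convergent} supplies: a majorant estimate $|1/\Ga^{(j)}_{n_1,\dots,n_m}(a,b,c)|\leq C\rho_1^{n_1}\cdots\rho_m^{n_m}$ uniformly on a parameter neighborhood, a special point $\tilde{x}$ at which the (Laurent) series converges uniformly and absolutely and can be differentiated term by term, and then the identity theorem in $x$. For part (1) one could salvage your argument with a routine uniform estimate (using $c=\ga_0-\be_1+1>0$, which you never invoke), but for part (2) the Laurent series $f^{(k)}$ is genuinely delicate: its convergence and coefficient growth must be controlled uniformly in $(a,b,c)$, and the hypotheses $-\sum_{l\leq k}\tilde{b}_l+\tilde{c}>0$, $2+\sum_{l\leq k-1}\tilde{b}_l-\tilde{c}>0$ of Lemma \ref{integer-convergent}(2), for both the unshifted and the shifted parameters ($\sigma=0,1$), are where the assumptions $(\be,\ga)\in\CB_k$ and $(\be_1+1,\be_2,\ga_0,\dots,\ga_m)\in\CB_k$ actually enter.

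Relatedly, you misattribute the role of the extra hypothesis $(\be_1+1,\be_2,\ga_0,\dots,\ga_m)\in\CB_k$: it is not needed to make $1+\al_0\neq0$ (at the integral point $1+\al_0=-\be_2\neq0$ automatically, since $\be_2>0$), and $Q_0^{-1}$ does not even occur in the relation of part (2), which only uses $P_0=Q_0\,C(a,b,c)^{-1}$. Its true purpose is the one you mention only in passing: it guarantees that the left-hand side $F^{(k)}(a-1,b,c-1;x)$, whose parameters correspond to $(\be_1+1,\be_2,\ga)$, satisfies the positivity conditions of Lemma \ref{integer-convergent}(2) and hence has a well-defined limit. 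Your computation of the scalar prefactors ($\frac{a-1}{c-a}$ in part (1), the factor $-1$ in part (2) coming from the ratio of Gamma factors in $\tilde{F}^{(k)}$) and your observation that one must not specialize $\tilde{F}$ directly because $\Ga(a)$ has a pole are both correct and consistent with the paper.
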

We put 
\begin{align*}
  &\Ga_{n_1 ,\ldots ,n_m}^{(0)}(a,b,c) \\
  &:=\GA{1-a-\sum_{l=1}^m n_l} \cdot \GA{c+\sum_{l=1}^m n_l}
  \cdot \prod_{l=1}^m \GA{1-b_l-n_l} \cdot \prod_{l=1}^m \GA{1+n_l} .
\end{align*}
To prove this proposition, 
we will use the following lemma. 
\begin{Lem}
  \label{integer-convergent}
  \begin{enumerate}[(1)]
  \item Let $\tilde{a} ,\tilde{b}_1 ,\ldots ,\tilde{b}_m ,\tilde{c}$ be integers, 
    and assume $\tilde{c} >0$. 
    Then, 
    there exists $\tilde{x} \in \C^m$ such that 
    the power series 
    \begin{align*}
      \sum_{n_1,\ldots,n_m=0} ^{\infty } 
      \frac{1}{\Ga_{n_1 ,\ldots ,n_m}^{(0)}(a,b,c)} \prod_{l=1}^m x_l^{n_l}
    \end{align*}
    as a function in $(a,b,c;x)$ is holomorphic  
    on a small neighborhood of $(\tilde{a},\tilde{b},\tilde{c};\tilde{x})$. 
    In particular, if $x\in \C^m$ belongs to a small neighborhood of $\tilde{x}$, 
    then this series has a limit as $(a,b,c) \to (\tilde{a},\tilde{b},\tilde{c})$. 
  \item For $1 \leq k \leq m$, 
    let $\tilde{a} ,\tilde{b}_1 ,\ldots ,\tilde{b}_m ,\tilde{c}$ be integers that satisfy 
    \begin{align*}
      -\sum_{l\leq k} \tilde{b}_l+\tilde{c}>0, \quad 
      2+\sum_{l\leq k-1} \tilde{b}_l-\tilde{c}>0 .
    \end{align*}
    Then, there exists $\tilde{x} \in \C^m$ such that 
    the Laurent series 
    \begin{align*}
      \sum_{n_1 ,\ldots ,n_m=0}^{\infty}
      \frac{1}{\Ga_{n_1,\ldots ,n_m}^{(k)}(a,b,c)} 
      \cdot \prod_{l\leq k-1} \left( \frac{x_k}{x_l} \right)^{n_l}
      \cdot x_k^{n_k}
      \cdot \prod_{l\geq k+1} \left( \frac{x_l}{x_k} \right)^{n_l}
    \end{align*}
    as a function in $(a,b,c;x)$ is holomorphic  
    on a small neighborhood of $(\tilde{a},\tilde{b},\tilde{c};\tilde{x})$. 
    In particular, if $x\in \C^m$ belongs to a small neighborhood of $\tilde{x}$, 
    then this series has a limit as $(a,b,c) \to (\tilde{a},\tilde{b},\tilde{c})$. 
  \end{enumerate}
  Further, we can differentiate these series term by term, and 
  the partial derivatives of them also have limits as 
  $(a,b,c) \to (\tilde{a},\tilde{b},\tilde{c})$. 
\end{Lem}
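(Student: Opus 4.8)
The plan is to use the reflection formula $\GA{z}\GA{1-z}=\pi/\sin(\pi z)$ to rewrite every coefficient $1/\Ga^{(0)}_{n_1,\dots,n_m}(a,b,c)$ (resp.\ $1/\Ga^{(k)}_{n_1,\dots,n_m}(a,b,c)$) as a ratio of Pochhammer symbols $(s,n)=\GA{s+n}/\GA{s}$ multiplied by a factor that is independent of $n$ and \emph{entire} in $(a,b,c)$, being a finite product of the entire functions $1/\GA{\,\cdot\,}$. Once this is done, it remains to exhibit a polydisc in $x$ on which the resulting series converges locally uniformly in $(a,b,c)$ near the prescribed integer point; holomorphy of the sum together with term-by-term differentiability then follow at once from the Weierstrass theorem.

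For part (1) I would record the termwise identity, obtained by the same reflection computation as in the proof of Lemma \ref{homology-lem} (the sign factors coming from $\GA{1-a-\sum_l n_l}$ and from the $\GA{1-b_l-n_l}$ cancel),
\begin{align*}
  \frac{1}{\Ga^{(0)}_{n_1,\dots,n_m}(a,b,c)}
  =\frac{1}{\GA{1-a}\,\GA{c}\,\prod_{l=1}^m\GA{1-b_l}}\cdot
  \frac{(a,n_1+\cdots+n_m)\prod_{l=1}^m (b_l,n_l)}{(c,n_1+\cdots+n_m)\prod_{l=1}^m n_l!},
\end{align*}
so that, for $|x_i|<1$,
\begin{align*}
  \sum_{n_1,\dots,n_m=0}^{\infty}\frac{1}{\Ga^{(0)}_{n_1,\dots,n_m}(a,b,c)}\prod_{l=1}^m x_l^{n_l}
  =\frac{F_D(a,b,c;x)}{\GA{1-a}\,\GA{c}\,\prod_{l=1}^m\GA{1-b_l}}.
\end{align*}
The right-hand side is holomorphic in $(a,b,c;x)$ on $\{c\notin\Z_{\le 0}\}\times\{|x_i|<1\}$, so, since $\tilde c>0$, any $\tilde x$ in the unit polydisc will do; term-by-term differentiability, hence the existence of the limits of the partial derivatives, is inherited from the series of $F_D$.

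For part (2) I would carry out the analogous rewrite of $1/\Ga^{(k)}_n$. Write $M:=\sum_{l\le k}n_l-\sum_{l>k}n_l$ for the exponent of $x_k$ in the general monomial, and put $P:=-\sum_{l\le k}b_l+c$ and $R:=2+\sum_{l<k}b_l-c$, so $P+R=2-b_k$. The factors $\GA{c-a-n_k}$, $\GA{1-b_l-n_l}$ $(l\ne k)$ and $\GA{1+n_l}$ are treated just as before, producing $(1-c+a,n_k)$, $(b_l,n_l)$, $1/n_l!$ and the entire factor $1/(\GA{c-a}\prod_{l\ne k}\GA{1-b_l})$. The delicate point is the remaining product $\GA{P-M}\GA{R+M}$: because $M$ ranges over all of $\Z$, the reflection $1/\GA{z-n}=(-1)^n(1-z,n)/\GA{z}$ must be applied to $\GA{P-M}$ when $M\ge 0$ and to $\GA{R+M}=\GA{R-|M|}$ when $M<0$, the other factor being expanded via $\GA{z+n}=\GA{z}(z,n)$. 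The upshot is that, for $(a,b,c)$ near $(\tilde a,\tilde b,\tilde c)$, $1/\Ga^{(k)}_n(a,b,c)$ equals the entire factor $1/(\GA{c-a}\GA{P}\GA{R}\prod_{l\ne k}\GA{1-b_l})$ times
\begin{align*}
  \frac{(1-c+a,n_k)}{n_k!}\prod_{l\ne k}\frac{(b_l,n_l)}{n_l!}\,\times\,
  \begin{cases}
    \dfrac{(1-P)(2-P)\cdots(M-P)}{R(R+1)\cdots(R+M-1)}, & M\ge 0,\\[2mm]
    \dfrac{(1-R)(2-R)\cdots(|M|-R)}{P(P+1)\cdots(P+|M|-1)}, & M<0,
  \end{cases}
\end{align*}
the overall sign being $+1$ because $|M|\equiv n_1+\cdots+n_m\pmod 2$. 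Now the two hypotheses $-\sum_{l\le k}\tilde b_l+\tilde c>0$ and $2+\sum_{l<k}\tilde b_l-\tilde c>0$ say precisely that $\tilde P>0$ and $\tilde R>0$; this is exactly what keeps the denominators $R(R+1)\cdots(R+M-1)$ (for $M\ge 0$) and $P(P+1)\cdots(P+|M|-1)$ (for $M<0$) nonzero at and near $(\tilde a,\tilde b,\tilde c)$, so each coefficient is holomorphic there.

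For convergence I would estimate, on a fixed compact neighbourhood of $(\tilde a,\tilde b,\tilde c)$, that $(1-c+a,n_k)/n_k!$ and each $(b_l,n_l)/n_l!$ are bounded by polynomials in $n_k$, $n_l$ (elementary, or Stirling), and that the two bracketed fractions above---quotients of $|M|$ consecutive linear factors by $|M|$ consecutive linear factors, the denominator being bounded away from $0$ thanks to $\tilde P,\tilde R>0$---are bounded by a polynomial in $|M|$. Hence the series is dominated, uniformly in the parameters, by $\sum_n p(n_1,\dots,n_m)\prod_l|y_l|^{n_l}$ for some polynomial $p$, where $y_l:=x_k/x_l$ $(l<k)$, $y_k:=x_k$, $y_l:=x_l/x_k$ $(l>k)$, and this converges as soon as all $|y_l|<1$. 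Choosing $\tilde x:=x^{(k)}$ from Section \ref{sec-cycle} with $\xi$ sufficiently small makes all $|y_l|<1$, and since the $y_l$ are holomorphic Laurent monomials in $x$, the Weierstrass theorem yields holomorphy of the sum near $(\tilde a,\tilde b,\tilde c;\tilde x)$ and legitimises term-by-term differentiation in the $x_i$; in particular the partial derivatives are holomorphic and have limits as $(a,b,c)\to(\tilde a,\tilde b,\tilde c)$. \textbf{The main obstacle} is precisely this last circle of estimates---the control of $\GA{P-M}\GA{R+M}$ over the full two-sided range of the Laurent index $M$---which is where both of the inequalities on $(\tilde b,\tilde c)$ get used.
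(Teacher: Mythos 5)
Your proposal is correct, and it is a majorant-series argument of the same general kind as the paper's, but the way you control the Gamma factors is genuinely different. The paper's (sketched) proof treats parts (1) and (2) uniformly: it first establishes a geometric bound $\bigl| 1/\Ga^{(j)}_{n_1,\ldots,n_m}(a,b,c) \bigr| \le C\rho_1^{n_1}\cdots\rho_m^{n_m}$ on a neighborhood of the integer point, by Gamma-function estimates carried out as in Ohara--Takayama \cite{OT}, \cite{OT-kaken}, and then chooses $\tilde{x}_i=\rho^{-2i}$ so that the whole series is dominated by $\sum_n \prod_l \eta^{n_l}$ with $0<\eta<1$. You instead rewrite each coefficient via the reflection formula as an $n$-independent entire prefactor times Pochhammer ratios; this makes the argument self-contained, yields the sharper polynomial (rather than merely geometric) growth in $n$, and --- most usefully --- makes completely explicit where the hypotheses enter: $\tilde{c}>0$ in part (1), and $\tilde{P},\tilde{R}>0$ in part (2) keep the denominators $(R,M)$ and $(P,|M|)$ bounded away from zero near the integer point, which is exactly the two-sided control of $\GA{P-M}\GA{R+M}$ over the Laurent range of $M$ that you correctly identify as the crux. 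Your treatment of part (1) also takes a shortcut the paper does not use, identifying the series with $F_D(a,b,c;x)$ divided by $\GA{1-a}\GA{c}\prod_l\GA{1-b_l}$; this is fine, but to conclude holomorphy of the series itself (not merely of the limit function) you should say explicitly that what transfers through the termwise identity is the \emph{locally uniform} convergence of the $F_D$ series jointly in $(a,b,c;x)$ for $c\notin\Z_{\le 0}$, $|x_i|<1$ --- a standard fact, and then Weierstrass' theorem gives the term-by-term differentiability exactly as in your part (2). With that small point spelled out, your argument fills in the estimates the paper delegates to \cite{OT}, at the cost of a case analysis ($M\ge 0$ versus $M<0$) that the paper's uniform geometric bound avoids.
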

We can show this lemma in a way that is analogous to
that used for \cite[Lemma 1]{OT}; 
see also \cite[pp.~18--21]{OT-kaken}. 
Although, in \cite{OT}, the parameter vector $(a,b,c)$ belongs to 
a neighborhood of a generic point, 
an analogous estimation of Gamma functions can be done in our case. 
\begin{proof}[Sketch of Proof]
  Let $0 \leq j \leq m$. 
  First, we can show that 
  there exist $C, \rho_1 ,\ldots ,\rho_m >0$ such that 
  the inequality 
  \begin{align*}
    \left| \frac{1}{\Ga_{n_1 ,\ldots ,n_m}^{(j)}(a,b,c)} \right|
    \leq C \rho_1^{n_1}\cdots \rho_m^{n_m}
  \end{align*}
  holds on a small neighborhood of $(\tilde{a},\tilde{b},\tilde{c})$. 
  Next, we put 
  $$
  \rho:=\max \{ \rho_1 ,\ldots ,\rho_m ,2  \} ,\quad 
  \tilde{x}_i :=\frac{1}{\rho ^{2i}} ,
  $$
  and $\tilde{x}:=(\tilde{x}_1, \ldots ,\tilde{x}_m)$. 
  We can show that 
  there exists $0 <\eta <1$ such that 
  the series in the lemma has the form 
  \begin{align*}
    \sum_{n_1 ,\ldots ,n_m =0}^{\infty} 
    \left( \prod_{l=1}^m \eta^{n_l} \right)
  \end{align*}
  for a majorant on a small neighborhood of 
  $(\tilde{a},\tilde{b},\tilde{c};\tilde{x})$. 
  Therefore, the series is uniformly and absolutely convergent, and it
  defines a holomorphic function. 
\end{proof}

\begin{proof}[Proof of Proposition \ref{contiguity-integer}]
  If $(a,b,c) = (-\be_1 ,(-\ga_1 ,\ldots ,-\ga_m),\ga_0-\be_1+1)$, 
  then
  the $\al_i$'s 
  are expressed as follows: 
  \begin{align*}
    & \al_0 =\be_1 -\sum_{l=0}^m \ga_l -1= -\be_2-1 ,\quad 
    \al_k =\ga_k \ (1 \leq k \leq m) ,\\
    & \al_{m+1} =\ga_0+1 ,\quad 
    \al_{m+2} =-\be_1 .
  \end{align*}
  Since these values and $1-\al_{m+2}=\be_1+1$ are not zero, 
  it follows from
  Fact \ref{intersection}, Proposition \ref{Q}, and 
  Remark \ref{detQ} that 
  both of the matrices 
  $Q_k(a,b,c;x)$ and 
  $C(a,b,c)$ are 
  well-defined and invertible.   
  \begin{enumerate}[(1)]
  \item The definition of $F_D(a,b,c;x)$ can be expressed by 
    the Gamma function: 
    \begin{align*}
      F_D(a,b,c;x)
      =\GA{1-a} \cdot \GA{c}
      \cdot \prod_{l=1}^m \GA{1-b_l} 
      \cdot \sum_{n_1,\ldots,n_m=0} ^{\infty } 
      \frac{1}{\Ga_{n_1 ,\ldots ,n_m}^{(0)}(a,b,c)} \prod_{l=1}^m x_l^{n_l} .
    \end{align*}
    By $(\be ,\ga) \in \CB_0$, 
    we have $c=\ga_0-\be_1+1>0$. 
    Then we can apply Lemma \ref{integer-convergent} (1) 
    to $F(a,b,c;x)$. 
    Note that $a-1=-\be_1 -1 \neq 0$, and $c-a=\ga_0+1 \neq 0$. 
  \item Let $\sigma$ be $0$ or $1$. 
    $(\be_1+\sigma ,\be_2 ,\ga_0 ,\ldots ,\ga_m) \in \CB_k$
    implies 
    \begin{align*}
      &-\sum_{l=1}^{k} b_l+(c-\sigma)
      =-(\be_1 +\sigma )+\sum_{l=0}^k \ga_l +1 >0, \\ 
      &2+\sum_{l=1}^{k-1} b_l-(c-\sigma)
      =(\be_1 +\sigma) -\sum_{l=0}^{k-1} \ga_l +1 >0.
    \end{align*}
    Then we can take the limit of $F^{(k)} (a,b,c;x)$ as 
    $(a,b,c) \to (-\be_1 ,(-\ga_1 ,\ldots ,-\ga_m),\ga_0-\be_1+1)$ 
    by Lemma \ref{integer-convergent} (2). 
  \end{enumerate}
  By the identity theorem for holomorphic functions, 
  it is sufficient to prove 
  the proposition on a small neighborhood of 
  some $x \in \C^m$. 
  Therefore, the proof is completed. 
\end{proof}

\section*{Acknowledgments}
The author thanks Professor Nobuki Takayama 
for posing this problem and for his constant encouragement.
This work was supported by JSPS KAKENHI Grant Numbers 
25220001, 26-1252.


\begin{thebibliography}{99}
\bibitem{Aomoto}
  K. Aomoto, 
  Les \'equations aux diff\'erences lin\'eaires et 
  les int\'egrales des fonctions multiformes, 
  {\it J. Fac. Sci. Univ. Tokyo Sect. IA Math.}, 
  \textbf{22} (1975), no. 3, 271--297. 

\bibitem{AK} 
  K. Aomoto and M. Kita, translated by K. Iohara, 
  {\it Theory of Hypergeometric Functions} 
  (Springer-Verlag, New York, 2011). 

\bibitem{CM}
  K. Cho and K. Matsumoto, 
  Intersection theory for twisted cohomologies and 
  twisted Riemann's period relations I,   
  {\it Nagoya Math. J.},  {\bf 139} (1995), 67--86. 

\bibitem{GKZ}
  I. M. Gel'fand, A. V. Zelevinsky, and M. M. Kapranov, 
  Hypergeometric functions and toral manifolds,   
  {\it Funct. Anal. Appl.}  {\bf 23} (1989), 
  no. 2, 94--106. 

\bibitem{M-FD} K. Matsumoto, 
  Monodromy and Pfaffian of Lauricella's $F_D$ 
  in terms of the intersection forms of twisted (co)homology groups, 
  {\it Kyushu J. Math.}, \textbf{67} (2013), no. 2, 367--387. 

\bibitem{Miller}
  W. Miller Jr., 
  Lie theory and Lauricella functions $F_D$, 
  {\it J. Mathematical Phys.}, \textbf{13} (1972), 1393--1399. 

\bibitem{OTT}
  M. Ogawa, A. Takemura, and N. Takayama,
  An application of $A$-hypergeometric equations 
  to conditional maximal likelihood estimation, 
  in preparation. 

\bibitem{OT-kaken}
  K. Ohara and N. Takayama, 
  Holonomic rank of $\mathcal{A}$-hypergeometric 
  differential-difference equations, 
  in: Computational Analysis of Hypergeometric
  Differential Equations Research Report of Grant-in-Aid 
  for Scientific Research (KAKENHI) 15340045, 2007, pp. 28--49 (in Japanese). 
  http://www.math.kobe-u.ac.jp/HOME/taka/2007/kaken-hg-all.pdf.

\bibitem{OT}
  K. Ohara and N. Takayama, 
  Holonomic rank of $\mathcal{A}$-hypergeometric 
  differential-difference equations, 
  {\it J. Pure Appl. Algebra}, \textbf{213} (2009), 
  no. 8, 1536--1544. 

\bibitem{SST}
  M. Saito, B. Sturmfels, and N. Takayama,
  {\it Gr\"{o}bner Deformations of Hypergeometric Differential Equations} 
  (Springer-Verlag, Berlin, 2000). 

\bibitem{Sasaki}
  T. Sasaki, 
  Contiguity relations of Aomoto-Gel'fand hypergeometric functions 
  and applications to Appell's system $F_3$ and Goursat's system ${}_3 F_2 $, 
  {\it SIAM J. Math. Anal.}, \textbf{22} (1991), no. 3, 821--846.  

\bibitem{Takayama}
  N. Takayama, Gr\"obner basis and the problem of contiguous relations, 
  {\it Japan J. Appl. Math.}, \textbf{6} (1989), no. 1, 147--160. 
\end{thebibliography}
\end{document}